\newtheorem{theorem}{Theorem}
\newtheorem{lemma}[theorem]{Lemma}
\newtheorem{corollary}[theorem]{Corollary}
\newtheorem{obs}[theorem]{Observation} \newtheorem{defi}[theorem]{Definition}
\newenvironment{definition}{\begin{defi}\rm}{\end{defi}}
\newtheorem{exa}[theorem]{Example}
\newtheorem{rem}[theorem]{Remark}
\newenvironment{remark}{\begin{rem}\rm}{\end{rem}}
\newtheorem{rems}[theorem]{Remarks}
\newtheorem{ack}[theorem]{Acknowlegment}
\def\H{\mathcal H}
\def\L{\mathcal L}
\def\K{\mathcal K}
\def\D{\mathcal D}
\def\Z{\mathcal Z}
\def\NN{{\mathbf N}}
\def\ZZ{{\mathbf Z}}
\def\CCC{{\mathbf C}}
\def\QQ{\mathbf Q}
\def\FF{\mathbf F}
\def\RR+{{\mathbf R}^*}
\def\TT{\mathbf T}
\def\kk{\mathbf k}
\def\Q_p{{\mathbf Q}_p}
\def\Proj{\rm Proj}
\def\ind{{\rm Ind}}
\def\Ga{\Gamma}
\def\ga{\gamma}
\def\La{\Lambda}
\def\la{\lambda}
\def\vfi{\varphi}
\def\Aut{{\rm Aut}}
\def\ind{{\rm Ind}}
\def\Ad{{\rm Ad}}
\newcommand{\GL}{\operatorname{GL}}
\newcommand{\Cent}{\operatorname{Cent}}
\newcommand{\Tr}{\operatorname{Tr}}
\newcommand{\Char}{\operatorname{Ch}}
\newcommand{\Ind}{\operatorname{Ind}}
\def\tout{\qquad\text{for all}\quad}
\newcommand{\Hi}{\mathcal H}
\newcommand{\Ki}{\mathcal K}
\newcommand{\GaFC}{\Ga_{\rm fc}}
\newtheorem{thmx}{Theorem}
\newtheorem{corx}[thmx]{Corollary}
\date{\today}
\begin{document}

\title[Plancherel formula for countable groups]{The Plancherel formula for  countable groups}

\address{Bachir Bekka \\ Univ Rennes \\ CNRS, IRMAR--UMR 6625\\
Campus Beaulieu\\ F-35042  Rennes Cedex\\
 France}
\email{bachir.bekka@univ-rennes1.fr}

\author{Bachir Bekka}

\thanks{The author acknowledges the support  by the ANR (French Agence Nationale de la Recherche)
through the project Labex Lebesgue (ANR-11-LABX-0020-01) .}
\begin{abstract}
We discuss a Plancherel formula for   countable groups,
which  provides a canonical   decomposition of the regular representation 
 of such a group $\Gamma$ into a direct integral of factor representations. 
Our main result  gives a precise description of this decomposition
in terms of the Plancherel formula  of the FC-center $\Gamma_{\rm fc}$ of $\Gamma$ (that is, the normal sugbroup 
of $\Ga$ consisting of elements with a finite conjugacy class); this description involves the action
of an appropriate totally disconnected compact group of automorphisms of $\Gamma_{\rm fc}$.
As an application, we determine the Plancherel formula for linear groups.  
In an appendix,  we use the Plancherel formula to provide a  unified  proof  for Thoma's and Kaniuth's theorems which respectively  characterize countable groups which are of type I and those whose regular representation is of type II.

\end{abstract}
\subjclass[2000]{22D10; 22D25; 46L10}
\maketitle
\section{Introduction}
\label{Intro}
Given a  second countable locally compact group $G,$ a fundamental 
object to study is its \textbf{unitary dual space} $\widehat{G}$,
that is,   the set irreducible unitary representations  of $G$ up to unitary equivalence.  
The space $\widehat{G}$ carries a natural Borel structure, called the Mackey Borel structure  (see \cite[\S 6]{Mackey-Borel} or \cite[\S 18.6]{Dixm--C*}).
A classification of  $\widehat{G}$ is considered as being  possible only if  $\widehat{G}$ 
is a standard Borel space;  according to  Glimm's celebrated theorem (\cite{Glimm}), this is the case if and only if  $G$ is of type I in the following sense.

Recall that a von Neumann algebra  is a self-adjoint subalgebra of $\L(\H)$ which is closed for the weak operator topology of $\L(\H)$, where $\H$  is a  Hilbert space.
 A von Neumann algebra  is a factor if  its center  only consists  of the scalar operators.
 
Let  $\pi$ be a  unitary representation of $G$ in a Hilbert space $\H$
(as we will only consider  representations which are unitary, we will often drop the
adjective ``unitary").
The von Neumann subalgebra of  $\L(\H)$ generated by $\pi(G)$  coincides
with the bicommutant $\pi(G)''$  of $\pi(G)$ in $\L(\H)$; we say that
$\pi$ is a factor representation if $\pi(G)''$   is a factor.

\begin{definition}
\label{Def-TypI}
The group  $G$ is of \textbf{type I} if, for every  factor representation
$\pi$ of $G$, the factor $\pi(G)''$ is of type I, that is, $\pi(G)''$ is isomorphic
to the von Neumann {algebra} $\L(\K)$ for some Hilbert space $\K;$ equivalently, 
the Hilbert space $\H$ of $\pi$ can written as 
tensor product $\K\otimes \K'$ of Hilbert spaces 
in such a way that 
$\pi$ is equivalent to  $\sigma\otimes I_{\K'}$ for an irreducible representation $\sigma$ of $G$ on $\K.$
\end{definition}

Important classes of groups are known to be of type I, such as semi-simple 
or nilpotent Lie groups.
A major problem in harmonic analysis is to  decompose  the left  regular representation  $\la_G$ on 
$L^2(G, \mu_G)$ for a Haar measure $\mu_G$
as a direct integral of irreducible representations.
When $G$ is of type I and unimodular, this is the content of the  classical 
 Plancherel theorem: there exist a unique measure $\mu$ on $\widehat{G}$ and
a unitary isomorphism between $L^2(G,\mu_G)$ and the direct integral of Hilbert spaces 
$\int_{\widehat{G}}^\oplus (\H_\pi\otimes \overline{\H_\pi}) d \mu(\pi)$  which transforms 
$\la_G$  into $\int_{\widehat{G}}^\oplus (\pi \otimes  I_{\overline{\H_\pi}})d\mu(x)$,
where $\overline{\Hi_\pi}$ is the conjugate of the Hilbert space $\Hi_\pi$ of $\pi;$
in particular, we have a Plancherel formula
$$
\Vert f\Vert_2^2 =\int_{\widehat{G}} \Tr(\pi(f)^*\pi(f)) d\mu(\pi)
\tout f\in L^1(G, \mu_G)\cap L^2(G, \mu_G),
$$
 where $\Vert f\Vert_2$ is the $L^2$-norm of $f,$  $\pi(f)$
 is the  value at $f$ of the natural extension of $\pi$ to a representation of $L^1(G, \mu_G)$, and 
$\Tr$ denotes the standard trace on $\L(\H_\pi);$ for all this,  see \cite[18.8.1]{Dixm--C*}.

When $G$ is  not type I, $\la_G$   usually admits several integral decompositions into irreducible representations
and it is not possible to single out a canonical one among them. 
However, when $G$ is unimodular, $\la_G$ does admit a canonical integral decomposition into  factor representations 
representations;  this the content of a  Plancherel theorem  which we will discuss in the case of a discrete group
(see Theorem~\ref{Pro-Plancherel}).

Let $\Ga$ be a countable group.
 As discussed below (see Theorem~\ref{Theorem-Thoma-Kaniuth}), $\Ga$ is  usually not of type I. 
 In order to state the Plancherel theorem for $\Ga,$ we need to replace
 the dual space $\widehat{\Ga}$ by the consideration 
 of Thoma's dual space $\Char(\Ga)$ which we now introduce.

 Recall that a function $t:\Ga\to \CCC$
is of positive type if the complex-valued matrix $(t(\ga_j^{-1} \ga_i))_{1\leq i,j\leq n}$ is positive semi-definite for any $\ga_1, \dots, \ga_n$ in $\Ga$

A function of positive type $t$ on $\Ga$ which is constant
on conjugacy classes and normalized (that is, $t(e)=1$) will be called a \textbf{trace}
on $G$. The set of traces on $\Ga$ will be denoted by  $\Tr(\Ga)$.

Let $t\in  \Tr(\Ga)$ and $(\pi_t, \H_t, \xi_t)$ be the  associated GNS triple 
(see \cite[C.4]{BHV}). Then 
$\tau_t: \pi(\Ga)''\to \CCC,$ defined by 
$\tau_t(T)=  \langle T \xi_t\mid \xi_t\rangle $ is a  trace on 
the von Neumann algebra $\pi_t(\Ga)'',$ 
that is, $\tau_t (T^*T)\geq 0$ and  $\tau_t(TS)=\tau_t(ST)$ for all $T,S\in \pi_t(\Ga)'';$
moreover, $\tau_t$ is faithful in the sense that $\tau_t(T^*T)>0$
for every $T\in \pi_t(\Ga)'', T\neq 0.$ Observe that 
$\tau_t(\pi(f))= t(f)$ for $f\in \CCC[\Ga],$ where $t$
denotes  the linear extension
of $t$ to the group algebra $\CCC[\Ga].$

The set $\Tr(\Ga)$  is  a 
convex subset of the unit ball of $\ell^\infty (\Ga)$ which is 
compact in the topology of pointwise convergence.
An extreme point of  $\Tr(\Ga)$  is called  	a \textbf{character} of $\Ga$;
we will refer to  $\Char(\Ga)$ as  \textbf{Thoma's dual space}.

Since $\Ga$ is countable, $\Tr(\Ga)$ is a compact metrizable space and 
$\Char(\Ga)$  is easily seen to be  a $G_\delta$ subset   of $\Tr(\Ga)$.
So, in contrast to $\widehat{\Ga}$, Thoma's dual space $\Char(\Ga)$ is always 
a standard Borel space.

An important fact is that $ \Tr(\Ga)$ is a simplex (see \cite[Satz 1]{Thoma1} or \cite[3.1.18]{Sakai});  by Choquet theory, this implies that every $\tau\in \Tr(\Ga)$ 
can be represented as integral $\tau=\int_{\Char(\Ga)}t d\mu(t)$
for a \emph{unique} probability measure $\mu$ on $\Char(\Ga).$

As we now explain, the set of characters of $\Ga$ parametrizes the factor representations of  finite type of $\Ga,$ up to quasi-equivalence; for more details, see \cite[\S 17.3]{Dixm--C*} or \cite[\S 11.C]{BH}.

Recall first that two  representations 
$\pi_1$ and $\pi_2$ of $\Ga$ are quasi-equivalent if there exists
an isomorphism $\Phi: \pi_1(\Ga)''\to \pi_2(\Ga)''$ of von Neumann algebras
such that $\Phi(\pi_1(\ga))=\pi_2(\ga)$ for every $\ga\in \Ga.$

Let $t\in  \Char(\Ga)$ and $\pi_t$ the associated GNS representation. Then 
 $\pi_t(\Ga)''$ is a  factor of finite type.
Conversely, let $\pi$ be a representation of $\Ga$ such that $\pi(\Ga)''$ is a  factor of finite type and let $\tau$ be the unique
normalized trace on $\pi(\Ga)''.$
 Then  $t:=\tau \circ \pi$ belongs to   $\Char(\Ga)$
 and only depends on the quasi-equivalence class $[\pi]$ of $\pi.$
 
 The map  $t\to [\pi_t]$ is a bijection between $\Char(\Ga)$ and the 
 set of quasi-equivalence classes of factor representations of  finite type of $\Ga.$

The following result   is a version  for countable groups of  a   Plancherel theorem 
due to Mautner \cite{Mautner} and Segal \cite{Segal} which holds more generally for 
any unimodular second countable  locally compact group;  its proof is easier 
in our setting and will be given in   Section~\ref{ProofPL}  for the  convenience of the reader.
\begin{thmx}
\label{Pro-Plancherel}
(\textbf{Plancherel theorem for countable groups})
Let $\Gamma$ be a countable group.
There exists a  probability measure $\mu$ on $\Char(\Ga)$,
a measurable field of representations $t \mapsto (\pi_t, \H_t)$ of $\Ga$
on  the standard Borel space $\Char(\Ga)$,
and an isomorphism of Hilbert spaces between $\ell^2(\Ga)$ and $\int_{\Char(\Ga)}^\oplus \H_t d\mu(t)$
which transforms $\la_\Ga$ into $ \int^\oplus_{\Char(\Ga)} \pi_t d\mu(t)$ and has  the following properties:
\begin{itemize}
\item[(i)]
$\pi_t$ is quasi-equivalent to the GNS representation associated to $t$;
in particular, the $\pi_t$'s are mutually disjoint factor representations of finite type,
for $\mu$-almost every $t\in\Char(\Ga)$;
\item[(ii)] the von Neumann algebra $L(\Ga):=\la_\Ga(\Ga)''$ is  mapped \emph{onto}  the direct integral
 $\int_{\Char(\Ga)}^\oplus \pi_t(\Ga)'' d\nu(t)$ of factors;
\item[(iii)] for every $ f\in \CCC[\Ga],$ the following  Plancherel formula holds:
$$
\Vert f\Vert_2^2 =\int_{\Char(\Ga)} \tau_t(\pi_t(f)^*\pi_t(f))d\mu(t)= \int_{\Char(\Ga)} t(f^*\ast f)d\mu(t).
$$
\end{itemize}
The measure $\mu$ is the unique probability measure on $\Char(\Ga)$ 
such that the Plancherel formula above holds. 
\end{thmx}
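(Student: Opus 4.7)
The plan is to obtain the decomposition by applying the central (direct integral) decomposition of von Neumann algebras to the group von Neumann algebra $L(\Ga) = \la_\Ga(\Ga)''$, and then to recognise the base of this decomposition as $\Char(\Ga)$.

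The starting observation is that $\delta_e \in \ell^2(\Ga)$ is a cyclic and separating vector for $L(\Ga)$, so that $\tau(T) := \langle T \delta_e \mid \delta_e\rangle$ is a faithful normal normalized trace on $L(\Ga)$. In particular $L(\Ga)$ is a finite von Neumann algebra with separable predual, and its centre is isomorphic to $L^\infty(X,\nu)$ for some standard probability space $(X,\nu)$. Invoking the central decomposition theorem, I obtain measurable fields $t\mapsto \H_t$ and $t\mapsto \pi_t$ such that $\ell^2(\Ga) \cong \int^\oplus_X \H_t \, d\nu(t)$, $\la_\Ga \cong \int^\oplus_X \pi_t \, d\nu(t)$, and $L(\Ga) \cong \int^\oplus_X \pi_t(\Ga)'' \, d\nu(t)$ with each $\pi_t(\Ga)''$ a factor and the $\pi_t$ mutually disjoint. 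Because $\tau$ is finite, every $\pi_t(\Ga)''$ is a finite factor, and $\tau$ itself disintegrates as $\tau = \int_X \tau_t\, d\nu(t)$, with $\tau_t$ the unique normalized trace on $\pi_t(\Ga)''$.

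Next I define the character map $\Phi: X \to \Char(\Ga)$ by $\Phi(t) := \tau_t \circ \pi_t$. By the bijection recalled in the introduction between $\Char(\Ga)$ and quasi-equivalence classes of finite factor representations, $\Phi$ is well-defined, and because the $\pi_t$ are pairwise disjoint it is injective on a conull subset of $X$. Standard Borel arguments then let me push $\nu$ forward to a probability measure $\mu := \Phi_*\nu$ on $\Char(\Ga)$ and re-parametrize the whole decomposition so that it is indexed by $\Char(\Ga)$; this delivers (i) and (ii). For (iii), I evaluate $\tau = \int \tau_t\, d\mu(t)$ at $\la_\Ga(f^*\ast f)$: the left-hand side equals $\langle \la_\Ga(f^*\ast f)\delta_e \mid \delta_e\rangle = \Vert f\Vert_2^2$, while the right-hand side is $\int \tau_t(\pi_t(f)^*\pi_t(f))\, d\mu(t) = \int t(f^*\ast f)\, d\mu(t)$. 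Uniqueness of $\mu$ follows from Choquet theory: any probability measure $\mu'$ on $\Char(\Ga)$ satisfying the Plancherel formula must, by polarization, give the same integrals $\int t(g)\, d\mu'(t)$ as $\mu$ for every $g \in \CCC[\Ga]$, so the corresponding barycenters in $\Tr(\Ga)$ coincide, and the uniqueness of the representing measure on the extreme points forces $\mu = \mu'$.

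The principal technical obstacle I anticipate lies not in any single step above but in gluing them together measurably: one must verify that the measurable field provided by central decomposition can be chosen so that for $\nu$-almost every $t$ the representation $\pi_t$ is actually quasi-equivalent to the GNS representation of $\Phi(t)$, and that the change of variables $t\mapsto \Phi(t)$ transports the measurable field of Hilbert spaces $t \mapsto \H_t$ correctly. This requires careful use of Borel selection together with the uniqueness of the normalized trace on a finite factor, but introduces no essentially new idea beyond the classical disintegration machinery.
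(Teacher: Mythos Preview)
Your proposal is correct and follows essentially the same strategy as the paper: apply the central decomposition of $L(\Ga)$, observe that the fibres are finite factors, extract a character from each fibre, and push the measure forward to $\Char(\Ga)$ via an injective Borel map; uniqueness via the simplex property of $\Tr(\Ga)$ is also identical.

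The one point of genuine difference is how the character is produced at each fibre. You invoke the abstract disintegration of the canonical trace $\tau$ on $L(\Ga)$ as $\tau=\int_X \tau_t\,d\nu(t)$ with $\tau_t$ the unique normalized trace on the finite factor $\pi_t(\Ga)''$, and set $\Phi(t)=\tau_t\circ\pi_t$. The paper instead decomposes the \emph{right} regular representation $\rho_\Ga$ alongside $\la_\Ga$, writes $\delta_e=(\xi_x)_x$, and uses the commutation of the two fields together with $\la_\Ga(\ga)\delta_e=\rho_\Ga(\ga^{-1})\delta_e$ to verify by a direct four-line computation that $\varphi_x(\ga)=\langle\pi_x(\ga)\xi_x\mid\xi_x\rangle$ is conjugation-invariant. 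Your route is slicker but relies on the trace-disintegration theorem as a black box; the paper's route is more elementary and self-contained, and has the incidental benefit of exhibiting the cyclic vector $\xi_t$ explicitly (this is reused later in the paper, e.g.\ in Remark~\ref{SupportPlancherel}). Either argument is perfectly adequate here.
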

The probability measure $\mu$ on $\Char(\Ga)$ from Theorem~\ref{Pro-Plancherel}   is called the \textbf{Plancherel measure} of $\Ga$.

\begin{remark}
\label{remark-PlancherelDiscrete}
The Plancherel measure gives rise  to what seems to be an interesting dynamical system on  $\Char(\Ga)$ involving 
the group $\Aut(\Ga)$  of automorphisms of $\Ga.$
We  will equip $\Aut(\Ga)$ with 
 the topology of pointwise convergence on $\Ga,$ for which it is a totally disconnected topological group. 
 The natural action of $\Aut(\Ga)$ on $\Char(\Ga)$, given
by $t^g(\ga)= t(g^{-1}(\ga))$ for $g\in \Aut(\Ga)$ and $t\in \Char(\Ga),$ is clearly continuous.

Since the induced action of $\Aut(\Ga)$ on $\ell^2(\Ga)$ is isometric, the following fact is an immediate consequence of
the uniqueness of the Plancherel measure $\mu$ of $\Ga:$

\medskip
\centerline{ the action of $\Aut(\Ga)$ on  $\Char(\Ga)$ preserves  $\mu$.}
\medskip

For example,  when $\Ga=\ZZ^d$, Thoma's dual 
$\Char(\Ga)$ is the torus $\TT^d$, the Plancherel measure  $\mu$ is the normalized Lebesgue
measure on $\TT^d$ which is indeed preserved by the group $\Aut(\ZZ^d)=\GL_d(\ZZ)$. Dynamical systems 
of the form $(\Lambda, \Char(\Ga), \mu)$ for a subgroup $\Lambda$ of $\Aut(\Ga)$ 
  may be viewed  as generalizations of  this example.
 
\end{remark}

 We denote by  $\Ga_{\rm fc}$ the  FC-centre of $\Ga,$  that is, 
the normal subgroup of elements in $\Ga$  with a finite conjugacy class.
It turns out (see Remark~\ref{SupportPlancherel})  that $t=0$ on $ \Ga\setminus \Ga_{\rm fc}$ for $\mu$-almost every $t\in\Char(\Ga)$.
In particular,  when $\Ga$ is ICC, that is, when $\GaFC=\{e\},$ 
the regular representation $\la_\Ga$ is factorial (see also Corollary~\ref{Cor-Lem-Center})
so that the Plancherel formula is vacuous in this case.

In fact, as we now see, the  Plancherel measure
of $\Ga$ is entirely determined by the Plancherel measure of $\Ga_{\rm fc}.$
Roughly speaking, we will see that 
 the Plancherel measure of $\Ga$ is the image of the  Plancherel measure of $\GaFC$ under the quotient map $\Char(\GaFC)\to \Char(\GaFC)/K_\Ga,$
 for a compact group $K_\Ga$ which we now define.

Let $K_\Ga$ be the closure
in $\Aut(\GaFC)$ of the subgroup $\Ad(\Ga)|_{\GaFC}$ 
given by conjugation with elements from $\Ga.$
Since every $\Ga$-conjugation class in $\GaFC$ is finite,
$K_\Ga$ is a compact group.
By a general fact about actions of compact groups on Borel spaces
(see Corollary 2.1.21 and Appendix in \cite{Zimmer}),
the quotient space 
$\Char(\GaFC)/K_\Ga$ is a standard Borel space.

Given a  function $t: H\to \CCC$ of positive type on a subgroup $H$ of a group $\Ga,$ we denote by  $\widetilde{t}$ the extension  of $t$ to $\Ga$ given by $\widetilde{t}=0$ outside $H.$ Observe that  $\widetilde{t}$ is of positive type on $\Ga$ (see for instance \cite[1.F.10]{BH}).

Here is our main result.
 \begin{thmx}
 \label{Pro-PlancherelFC}
 (\textbf{Plancherel measure: reduction to the FC-center})
 Let $\Gamma$ be a countable group. Let $\nu$ be
the  Plancherel measure  of $\Ga_{\rm fc}$
and $\la_{\GaFC}=\int^\oplus_{\Char(\GaFC)} \pi_t d\nu(t)$
the integral decomposition of the regular representation of $\GaFC$ as in 
Theorem~\ref{Pro-Plancherel}. Let 
$\dot{\nu}$ be the image of $\nu$ under the quotient map 
$\Char(\GaFC)\to \Char(\GaFC)/K_\Ga.$
\begin{itemize}
\item[(i)] 
 For every  $K_\Ga$-orbit $\mathcal O$ in $\Char(\GaFC),$ let 
 $m_{\mathcal O}$ be the unique normalized $K_\Ga$-invariant probability measure
 on $\mathcal O$  and let 
 $\pi_{\mathcal O}:=\int_{\mathcal O}^\oplus \pi_t m_{\mathcal O}(t)$.
Then  the induced representation $\widetilde{\pi_{\mathcal O}}:=\Ind_{\Ga_{\rm fc}}^\Ga \pi_{\mathcal O}$ is factorial for $\dot{\nu}$-almost every ${\mathcal O}$ and we have a direct integral decomposition  of the von Neumann algebra $L(\Ga)$ into factors
 $$
 L(\Ga)=\int_{{\Char(\GaFC)}/K_\Ga}^\oplus \widetilde{\pi_{\mathcal O}}(\Ga)'' d\dot{\nu}({\mathcal O}).$$
 \item[(ii)]  The Plancherel measure of  $\Ga$ 
 is the image $\Phi_*(\nu)$ of  $\nu$ under the map 
 $$
 \Phi: \Char(\Ga_{\rm fc})\to \Tr(\Ga), \qquad t\mapsto \int_{K_\Ga} \widetilde{t^g} dm(g),
 $$
 where  $m$ is the normalized Haar measure on $K_\Ga.$
\end{itemize}
\end{thmx}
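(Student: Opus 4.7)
My plan is to lift the Plancherel decomposition of $\GaFC$ to $\Ga$ via induction, regroup along $K_\Ga$-orbits in $\Char(\GaFC)$, show that the resulting fibers are factorial (this will be the main obstacle), identify this decomposition with the Plancherel decomposition of $\Ga$ by uniqueness in Theorem~\ref{Pro-Plancherel}, and finally compute the associated characters to read off the formula for the Plancherel measure.

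\textbf{Step 1 (inducing and regrouping).} The coset decomposition of $\ell^2(\Ga)$ yields $\la_\Ga \cong \Ind_{\GaFC}^\Ga \la_{\GaFC}$; combining this with the compatibility of induction and direct integrals applied to the Plancherel decomposition of $\GaFC$ gives
$$\la_\Ga \cong \int^\oplus_{\Char(\GaFC)} \Ind_{\GaFC}^\Ga \pi_t\, d\nu(t).$$
By Remark~\ref{remark-PlancherelDiscrete}, $\nu$ is $K_\Ga$-invariant, and $K_\Ga$ being compact, $\nu$ disintegrates along the quotient as $\nu = \int m_{\mathcal O}\, d\dot{\nu}(\mathcal O)$; transitivity of $K_\Ga$ on each orbit makes each $m_{\mathcal O}$ the unique $K_\Ga$-invariant probability on $\mathcal O$. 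Re-grouping,
$$\la_\Ga \cong \int^\oplus_{\Char(\GaFC)/K_\Ga} \widetilde{\pi_{\mathcal O}}\, d\dot{\nu}(\mathcal O), \qquad \widetilde{\pi_{\mathcal O}} = \Ind_{\GaFC}^\Ga \pi_{\mathcal O}.$$

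\textbf{Step 2 (factoriality --- the main obstacle).} To prove part~(i) I will show that the above is the central decomposition of $L(\Ga)$, equivalently that
$$Z(L(\Ga)) = L^\infty(\Char(\GaFC)/K_\Ga, \dot{\nu}).$$
The Plancherel identification $Z(L(\GaFC)) \cong L^\infty(\Char(\GaFC), \nu)$, together with the fact that the $\Ga$-action on $Z(L(\GaFC))$ factors through $K_\Ga$, gives $Z(L(\GaFC))^\Ga = L^\infty(\Char(\GaFC)/K_\Ga, \dot{\nu})$, so the task reduces to
$$L(\GaFC)' \cap L(\Ga) = Z(L(\GaFC)).$$
The hard direction is $\subseteq$: expanding a commutant element as a Fourier series $a = \sum_{[\ga] \in \Ga/\GaFC} a_\ga \la(\ga)$ with $a_\ga \in L(\GaFC)$, the relation $[a, L(\GaFC)] = 0$ translates, via the Plancherel disintegration of $L(\GaFC)$, into the requirement that each $a_\ga$ be supported on the $\Ad(\ga)$-fixed locus in $\Char(\GaFC)$. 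The technical crux will be showing that this fixed locus is $\nu$-null for every $\ga \notin \GaFC$ --- a measure-theoretic freeness of the outer $\Ga/\GaFC$-action on $(\Char(\GaFC), \nu)$ --- which I expect to extract from the description of the support of the Plancherel measure in Remark~\ref{SupportPlancherel} together with the definition of the FC-center.

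\textbf{Step 3 (characters and the Plancherel measure).} Granted part~(i), the uniqueness clause of Theorem~\ref{Pro-Plancherel} identifies the Plancherel measure of $\Ga$ as the pushforward of $\dot{\nu}$ under the map $\mathcal O \mapsto \chi_{\mathcal O}$, where $\chi_{\mathcal O}(\ga) := \tau_{\mathcal O}(\widetilde{\pi_{\mathcal O}}(\ga))$ and $\tau_{\mathcal O}$ is the unique normalized trace on the factor $\widetilde{\pi_{\mathcal O}}(\Ga)''$. Remark~\ref{SupportPlancherel} forces $\chi_{\mathcal O}(\ga) = 0$ for $\ga \notin \GaFC$. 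For $\ga \in \GaFC$, trace-invariance of $\tau_{\mathcal O}$ under conjugation by the coset shifts of $\widetilde{\pi_{\mathcal O}}$ forces its restriction to $\pi_{\mathcal O}(\GaFC)'' = \int^\oplus_{\mathcal O} \pi_t(\GaFC)''\, dm_{\mathcal O}(t)$ to have the form $\int \tau_t(\cdot)\, dw(t)$ with $w$ a $K_\Ga$-invariant probability on $\mathcal O$, hence $w = m_{\mathcal O}$. This gives $\chi_{\mathcal O}(\ga) = \int_{\mathcal O} t(\ga)\, dm_{\mathcal O}(t) = \int_{K_\Ga} t_0^g(\ga)\, dm(g)$ for any fixed $t_0 \in \mathcal O$; combined with vanishing off $\GaFC$, $\chi_{\mathcal O} = \Phi(t_0)$, and so the Plancherel measure of $\Ga$ equals $\Phi_*(\nu)$ as claimed.
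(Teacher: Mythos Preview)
Your Step~1 (inducing and regrouping over $K_\Ga$-orbits) and the overall architecture match the paper's proof, and your Step~3 plan for reading off the character is essentially the paper's Steps~4--6. The problem is Step~2.

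You reduce the factoriality to the identity
\[
L(\GaFC)' \cap L(\Ga) \;=\; Z\bigl(L(\GaFC)\bigr),
\]
and propose to prove it by a freeness argument: for each coset representative $\ga\notin\GaFC$, the $\Ad(\ga)$-fixed locus in $\Char(\GaFC)$ should be $\nu$-null. Both the target identity and the freeness claim are \emph{false} in general. Take $\Ga$ ICC: then $\GaFC=\{e\}$, $L(\GaFC)=\CCC I$, so $L(\GaFC)'\cap L(\Ga)=L(\Ga)$ while $Z(L(\GaFC))=\CCC I$; and $\Char(\GaFC)$ is a single point, fixed by every $\Ad(\ga)$. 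More generally, whenever some $\ga\notin\GaFC$ centralises $\GaFC$ (which happens whenever $\GaFC$ is not self-centralising), $\Ad(\ga)$ acts trivially on $\Char(\GaFC)$ and your argument cannot force $a_\ga=0$. The relative commutant $L(\GaFC)'\cap L(\Ga)$ genuinely sees these extra elements; only the full centre $Z(L(\Ga))$ does not.

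What you actually need is the weaker statement $Z(L(\Ga))\subset \la_\Ga(\GaFC)''$, and this follows not from any measure-theoretic freeness on $\Char(\GaFC)$ but from the elementary $\ell^2$-argument of Lemma~\ref{Lem-Center}: an element of $Z(L(\Ga))$ has Fourier coefficients constant on $\Ga$-conjugacy classes, and square-summability kills the coefficients on infinite classes, i.e.\ on $\Ga\setminus\GaFC$. Once you have $Z(L(\Ga))\subset \la_\Ga(\GaFC)''$, the paper proceeds by an explicit intertwining (its Steps~1--2) showing $\widetilde{\pi_{\mathcal O}}|_{\GaFC}\cong I_{\ell^2(R)}\otimes\pi_{\mathcal O}$, from which one extracts that each central element decomposes as a diagonal operator with fibre in $Z(\pi_{\mathcal O}(\GaFC)'')\cong L^\infty(\mathcal O)$; commutation with the full $\Ga$-action then forces this function to be $K_\Ga$-invariant on the transitive orbit $\mathcal O$, hence constant. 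This last step is close in spirit to your Step~3 trace argument, but applied to the centre rather than to the relative commutant.
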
 
It is worth mentioning that the \emph{support} of $\mu$ was determined in \cite{Thoma-Plancherel}.
For an expression of the map $\Phi$ as in Theorem~\ref{Pro-PlancherelFC}.ii without reference to the 
group $K_\Ga$, see Remark~\ref{Rem-Pro-PlancherelFC}.

 As we next see, the Plancherel measure on $\GaFC$ can be explicitly described in the case of a linear group $\Ga.$
 We  first need to discuss the Plancherel formula  for a so-called \textbf{central group}, 
 that is, a central extension of a finite group.
 
 Let $\La$ be a central group. Then $\La$ is  of type I (see Theorem~\ref{Theorem-Thoma-Kaniuth}).
 In fact, $\widehat{\La}$ can be described as follows; let $r: \widehat{\La}\to \widehat{Z(\La)}$ be the 
 restriction map, where $Z(\La)$ is the center of $\La.$ Then, for $\chi\in  \widehat{Z(\La)},$ 
 every $\pi\in r^{-1}(\chi)$  is equivalent to a subrepresentation of the finite dimensional 
 representation $\Ind_{Z(\La)}^\La \chi$, by a generalized  Frobenius reciprocity
 theorem  (see \cite[Theorem 8.2]{Mackey0}); in particular, 
 $r^{-1}(\chi)$ is  finite. 
 The  Plancherel measure  $\nu$ on $\Char(\La)$ is, in principle,  easy to determine:
 we identify every  $\pi\in\widehat{\La}$ with its 
 normalized character given by $x\mapsto \dfrac{1}{\dim \pi}\Tr \pi(x)$;
for every Borel subset $A$ of $\Char(\La)$, we have
$$\nu (A)= \int_{\widehat{Z(\La)}} \dfrac{\# (A\cap r^{-1}(\chi)) }{\sum_{\pi \in r^{-1}(\chi)}(\dim \pi)^2} d\chi,$$
where $d\chi$  is the normalized Haar measure on the abelian compact group $\widehat{Z(\La)}$.

 \begin{corx}
 \label{Theo-Plancherel-Linear}
 (\textbf{The Plancherel measure for linear groups})
Let $\Ga$ be a countable linear group. 
\begin{itemize}
\item[(i)] $\Ga_{\rm fc}$ is a central group;
\item[(ii)] $K_\Ga$ coincides with $\Ad(\Ga)|_{\Ga_{\rm fc}}$ and is a finite group;
\item[(iii)] the Plancherel measure  of $\Ga$ is the image of  the Plancherel measure of $\Ga_{\rm fc}$ under the map 
 $\Phi: \Char(\Ga_{\rm fc}) \to \Tr(\Ga)$ given by 
 $$\Phi(t)=\dfrac{1}{\# \Ad(\Ga)|_{\Ga_{\rm fc}}}\sum_{s\in\Ad(\Ga)|_{\Ga_{\rm fc}}} t^s.
 $$
 \end{itemize}

 \end{corx}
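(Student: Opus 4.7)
The plan is to reduce (i) and (ii) to a single Noetherian Zariski-closure argument carried out inside $\GL_n(\overline{K})$, where $\overline{K}$ is an algebraic closure of a field in which $\Ga$ embeds faithfully. Part (iii) then drops out of Theorem~\ref{Pro-PlancherelFC}.ii by specializing the integral formula to a finite group.

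For (i), fix $\Ga\subseteq\GL_n(\overline{K})$ and let $\overline{\GaFC}$ denote the Zariski closure of $\GaFC$. Since the Zariski topology on $\GL_n(\overline{K})$ is Noetherian, some finite subset $\{\ga_1,\dots,\ga_k\}\subseteq\GaFC$ already has Zariski closure equal to $\overline{\GaFC}$. For any $z\in\GL_n(\overline{K})$ the locus $\{x:xz=zx\}$ is Zariski-closed, so $z$ commutes with every element of $\GaFC$ if and only if it commutes with each $\ga_i$. Intersecting with $\GaFC$ gives $Z(\GaFC)=\bigcap_{i=1}^{k}C_{\GaFC}(\ga_i)$. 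Because every $\ga_i$ lies in the FC-centre, each $C_{\GaFC}(\ga_i)$ has finite index in $\GaFC$, so $Z(\GaFC)$ has finite index in $\GaFC$ and $\GaFC$ is a central group.

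For (ii), write $Z=Z(\GaFC)$ and pick coset representatives $\ga_1,\dots,\ga_m$ of $Z$ in $\GaFC$. Since $Z$ is central in $\GaFC$, an element $g\in\Ga$ centralizes $\GaFC$ precisely when it centralizes $Z$ together with every $\ga_i$. Each $\ga_i\in\GaFC$, so $C_\Ga(\ga_i)$ has finite index in $\Ga$. Applying the same Zariski-closure argument to $Z\subseteq\GL_n(\overline{K})$ produces finitely many $z_1,\dots,z_\ell\in Z$ with $C_\Ga(Z)=\bigcap_{j=1}^{\ell} C_\Ga(z_j)$, again a finite intersection of finite-index subgroups since every $z_j\in\GaFC$. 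Hence $C_\Ga(\GaFC)$ has finite index in $\Ga$, so $\Ad(\Ga)|_{\GaFC}\cong \Ga/C_\Ga(\GaFC)$ is finite; a finite subgroup of the Hausdorff topological group $\Aut(\GaFC)$ is automatically closed, so it coincides with its closure $K_\Ga$. For (iii), Theorem~\ref{Pro-PlancherelFC}.ii now applies with finite $K_\Ga$: the normalized Haar measure $m$ is $\tfrac{1}{\#K_\Ga}$ times counting measure, so $\int_{K_\Ga}\widetilde{t^g}\,dm(g)$ collapses to the finite average $\tfrac{1}{\#\Ad(\Ga)|_{\GaFC}}\sum_{s\in\Ad(\Ga)|_{\GaFC}}\widetilde{t^s}$, which is exactly $\Phi(t)$ (viewing $t^s$ as extended by zero to $\Ga$).

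The main technical hinge I anticipate is the very first step: making sure that Zariski-denseness actually reduces the infinitely many commutation conditions defining $Z(\GaFC)$, respectively $C_\Ga(Z)$, to finitely many. This one Noetherian input powers both (i) and (ii) simultaneously and is precisely what distinguishes the linear setting from the general countable case, where a priori the closure $K_\Ga$ may be strictly larger than $\Ad(\Ga)|_{\GaFC}$ and even infinite.
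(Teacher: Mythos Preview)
Your proof is correct, but it takes a different route from the paper's. The paper works with the Zariski closure $\mathbf G$ of $\Ga$ itself (not of $\GaFC$) and its identity component $\mathbf G_0$: for each $\ga\in\GaFC$ the centraliser $\Ga_\ga$ has finite index in $\Ga$, so its Zariski closure contains $\mathbf G_0$, whence the whole finite-index subgroup $\Ga_0:=\mathbf G_0\cap\Ga$ centralises $\GaFC$; this simultaneously gives (i) (via $\Ga_0\cap\GaFC\subset Z(\GaFC)$) and (ii). Your argument instead exploits Noetherianity directly: pick finitely many $\ga_i\in\GaFC$ whose Zariski closure is already $\overline{\GaFC}$, and use that centralisers are Zariski-closed to reduce $Z(\GaFC)$ and $C_\Ga(\GaFC)$ to finite intersections of finite-index subgroups. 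Your approach is arguably more elementary, since it avoids any structural input about algebraic groups (finiteness of the component group), needing only that the Zariski topology is Noetherian and that commuting is a closed condition; the paper's approach, on the other hand, yields the extra information that a single explicit finite-index subgroup $\Ga_0$ does the job, which is exactly what is exploited in the connected case of Corollary~\ref{Cor-Theo-Plancherel-Linear}. Note, incidentally, that in your treatment of (ii) the detour through $Z$ and coset representatives is unnecessary: the same $\ga_1,\dots,\ga_k$ from (i) already give $C_\Ga(\GaFC)=\bigcap_i C_\Ga(\ga_i)$ directly.
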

 
 When the Zariski closure of the linear  group $\Ga$ is connected, the Plancherel measure of $\Ga$ has a particularly simple form.
 
  \begin{corx}
 \label{Cor-Theo-Plancherel-Linear}
 (\textbf{The Plancherel measure for linear groups-bis})
 Let $\mathbf G$ be a \emph{connected} linear algebraic group  over a field $\kk$ and let $\Ga$ be a countable Zariski dense subgroup of $\mathbf G$. 
 The Plancherel measure of $\Ga$ is the image of  the normalized
 Haar measure $d\chi$ on $\widehat{Z(\Ga)}$ under the map 
 $$ \widehat{Z(\Ga)}\to \Tr(\Ga),\qquad \chi \mapsto \widetilde{\chi}$$ 
and the Plancherel formula is given for every $ f\in \CCC[\Ga]$ by 
$$
\Vert f\Vert_2^2 =\int_{ \widehat{Z(\Ga)}}\mathcal{F }( (f^*\ast f)|_{Z(\Ga)})(\chi)d\chi,$$

where $\mathcal{F }$ is the Fourier transform on the abelian group $Z(\Ga).$
 
The previous conclusion holds in the following two cases:
 \begin{itemize}
 \item[(i)] $\kk$ is a countable field of characteristic $0$ and $\Ga= \mathbf G(\kk)$ is the group of $\kk$-rational points in $\mathbf G$;
 \item[(ii)] $\kk$ is a local field (that is, a non discrete locally compact field),
 $\mathbf{G}$ has no proper   $\kk$-subgroup $\mathbf{H}$ such that $(\mathbf G/\mathbf H)(\kk)$ is compact, and $\Ga$ is a lattice  in $\mathbf G(\kk).$
 \end{itemize}

 \end{corx}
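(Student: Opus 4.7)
The plan is to deduce this corollary from Corollary~\ref{Theo-Plancherel-Linear} by showing that the connectedness and Zariski-density hypotheses force $\GaFC=Z(\Ga)$ and $K_\Ga=\{1\}.$

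First, I would identify the FC-center. The inclusion $Z(\Ga)\subseteq \GaFC$ is automatic. Conversely, let $\ga\in\GaFC,$ so that the centralizer $C_\Ga(\ga)$ has finite index in $\Ga;$ hence $\Ga$ is a finite union of cosets of $C_\Ga(\ga),$ and by Zariski density, $\mathbf G$ is a finite union of cosets of $\overline{C_\Ga(\ga)}^{\rm Zar}.$ Since $\mathbf G$ is connected, hence irreducible as a variety, it cannot be a finite union of proper Zariski-closed subsets; this forces $\overline{C_\Ga(\ga)}^{\rm Zar}=\mathbf G.$ Because $C_\Ga(\ga)\subseteq C_{\mathbf G}(\ga)$ and the latter is Zariski closed, $C_{\mathbf G}(\ga)=\mathbf G,$ that is, $\ga\in Z(\mathbf G)\cap\Ga\subseteq Z(\Ga).$ Thus $\GaFC=Z(\Ga)$ is abelian; every inner automorphism of $\Ga$ restricts to the identity on $\GaFC,$ so $\Ad(\Ga)|_{\GaFC}=\{1\},$ and by Corollary~\ref{Theo-Plancherel-Linear}(ii) the compact group $K_\Ga$ is trivial.

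Next I would combine this with the structure provided by Corollary~\ref{Theo-Plancherel-Linear}(iii). With $K_\Ga$ trivial, the averaging map $\Phi$ reduces to the trivial-extension map $t\mapsto \widetilde t.$ For the countable abelian group $Z(\Ga),$ Thoma's dual space coincides with the Pontryagin dual $\widehat{Z(\Ga)},$ and the Plancherel measure of $Z(\Ga)$ provided by Theorem~\ref{Pro-Plancherel} is the normalized Haar measure $d\chi$ (this is the classical abelian Plancherel theorem, which follows at once from Theorem~\ref{Pro-Plancherel} applied to $Z(\Ga)$ via the Fourier transform identification $\ell^2(Z(\Ga))\cong L^2(\widehat{Z(\Ga)},d\chi)$). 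Consequently, the Plancherel measure of $\Ga$ is the image of $d\chi$ under $\chi\mapsto \widetilde\chi,$ and the Plancherel formula of Theorem~\ref{Pro-Plancherel} becomes
$$
\Vert f\Vert_2^2=\int_{\widehat{Z(\Ga)}}\widetilde\chi(f^*\ast f)\,d\chi=\int_{\widehat{Z(\Ga)}}\sum_{z\in Z(\Ga)}(f^*\ast f)(z)\,\chi(z)\,d\chi,
$$
and the inner sum is precisely $\mathcal F\bigl((f^*\ast f)|_{Z(\Ga)}\bigr)(\chi).$

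Finally, I would verify Zariski density in the two cases. Case (i) is a classical theorem of Rosenlicht: for a connected linear algebraic group $\mathbf G$ over a field $\kk$ of characteristic zero, $\mathbf G(\kk)$ is Zariski dense in $\mathbf G,$ and countability of $\Ga=\mathbf G(\kk)$ follows from countability of $\kk.$ Case (ii) is the Borel density theorem for lattices in algebraic groups over local fields: the hypothesis that $\mathbf G$ admits no proper $\kk$-subgroup $\mathbf H$ with $(\mathbf G/\mathbf H)(\kk)$ compact is exactly what guarantees that any lattice in $\mathbf G(\kk)$ is Zariski dense. The only genuinely new difficulty is the algebro-geometric step identifying $\GaFC$ with $Z(\Ga);$ everything else is either contained in Corollary~\ref{Theo-Plancherel-Linear}, is routine abelian Fourier analysis, or follows from the two density theorems just cited.
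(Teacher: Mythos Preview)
Your proposal is correct and follows essentially the same approach as the paper. The paper derives $\GaFC=Z(\Ga)$ by specializing the proof of Corollary~\ref{Theo-Plancherel-Linear} (where it is shown that $\Ga_0:=\mathbf G_0\cap\Ga$ acts trivially on $\GaFC$) to the connected case $\mathbf G=\mathbf G_0$, whereas you run the same Zariski-closure/irreducibility argument directly; the remaining steps---triviality of $K_\Ga$, reduction of $\Phi$ to $t\mapsto\widetilde t$, identification of the abelian Plancherel measure with Haar measure, and the citations of Rosenlicht and (Borel--)Shalom density for cases (i) and (ii)---match the paper's treatment.
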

Corollary~\ref{Cor-Theo-Plancherel-Linear} generalizes the Plancherel theorem obtained in \cite[Theorem 4]{C-PJ}  for $\Ga= \mathbf G(\QQ)$ and in \cite[Theorem 3.6]{PJ} for   $\Ga= \mathbf G(\ZZ)$, in the case where $\mathbf G$ is a unipotent linear algebraic group over $\QQ;$ indeed, $\mathbf G$ is connected  (since the 
 exponential map identifies $\mathbf G$ with its Lie algebra, as affine varieties) and these two results follow from  (i) and (ii) respectively.

In an appendix to this article, we use Theorem~\ref{Pro-Plancherel} to give a unified proof of Thoma's and Kaniuth's results (\cite{Thoma1},\cite{Thoma2}, \cite{Kaniuth}) as stated in the following theorem. 
For a group $\Ga,$ we denote by $[\Ga, \Ga]$ its commutator subgroup.
Recall  that $\Ga$ is said to be virtually abelian if it contains an abelian subgroup of finite index.

The regular representation $\la_\Ga$ is of type I (or type II)
if the von Neumann algebra $L(\Ga)$ is of type I (or type II); equivalently (see Corollaire 2 in \cite[Chap.~II, \S~3, 5]{Dixm--vN}),
if $\pi_t(\Ga)''$ is a finite dimensional factor (or a factor of type II)
for $\mu$-almost every $t\in \Char(\Ga)$
in the Plancherel decomposition $\la_\Ga= \int^\oplus_{\Char(\Ga)} \pi_t d\mu(t)$ 
from Theorem~\ref{Pro-Plancherel}.

\begin{thmx}
[\textbf{Thoma}, \textbf{Kaniuth}]
% 7.D.1
\label{Theorem-Thoma-Kaniuth}
Let $\Gamma$ be a  countable group.
The following properties are equivalent:
\begin{itemize}
\item[(i)] 
$\Gamma$ is type I;
\item[(ii)] 
$\Gamma$ is virtually abelian;
\item[(iii)] the regular representation $\la_\Ga$ is of type I;
\item[(iv)] every irreducible unitary representation of $\Ga$ is finite dimensional;
\item[(iv')] there exists an integer $n\geq 1$ such that every irreducible unitary representation of $\Ga$ 
has dimension $\leq n.$
\end{itemize}
Moreover, the following properties are equivalent:
\begin{itemize}
 \item[(v)] $\la_\Ga$ is of   type II;
 \item[(vi)] either $[\Gamma \,\colon \Ga_{\rm fc}] = \infty$ 
or $[\Gamma \,\colon \Ga_{\rm fc}] < \infty$ and $[\Ga, \Ga]$ is infinite.
\end{itemize}
\end{thmx}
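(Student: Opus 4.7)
The strategy is to read off the type of $L(\Ga)$ from the canonical decomposition
$L(\Ga)=\int^\oplus_{\Char(\Ga)} \pi_t(\Ga)''\,d\mu(t)$
of Theorem~\ref{Pro-Plancherel}, combined with the reduction to the FC-center of Theorem~\ref{Pro-PlancherelFC}. Since each $\pi_t(\Ga)''$ is a factor of finite type, it is either a matrix algebra $M_{n(t)}(\CCC)$ (type $\mathrm I_{n(t)}$) or a type II$_1$ factor. Hence $L(\Ga)$ is of type I iff, for $\mu$-almost every $t$, the character $t$ is the normalised character of a finite-dimensional irreducible representation of $\Ga$; and $L(\Ga)$ is of type II iff $\pi_t(\Ga)''$ is II$_1$ for $\mu$-almost every $t$. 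Both halves of the theorem will be extracted from this fibrewise description.

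For the first equivalences I would run the cycle (ii) $\Rightarrow$ (iv') $\Rightarrow$ (iv) $\Rightarrow$ (i) $\Rightarrow$ (iii) $\Rightarrow$ (ii). The first step is the classical Mackey/Frobenius argument: if $A\lhd \Ga$ is abelian of index $n$, every irreducible unitary representation of $\Ga$ is a subrepresentation of some $\Ind_A^\Ga\chi$ and therefore has dimension $\leq n$. Then (iv') $\Rightarrow$ (iv) is trivial, and (iv) $\Rightarrow$ (i) follows because a II$_1$ factor representation admits no finite-dimensional irreducible subrepresentation, so (iv) rules out II$_1$ factors for \emph{any} factor representation of $\Ga$, forcing all of them to be of type I. Finally (i) $\Rightarrow$ (iii) is immediate from the Plancherel integral.

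The main work is (iii) $\Rightarrow$ (ii). Theorem~\ref{Pro-PlancherelFC} presents $L(\Ga)$ as a direct integral of induced factors $\widetilde{\pi_{\mathcal O}}(\Ga)''$ with $\widetilde{\pi_{\mathcal O}}=\Ind_{\GaFC}^\Ga \pi_{\mathcal O}$, indexed by $K_\Ga$-orbits $\mathcal O$ in $\Char(\GaFC)$. Under (iii), almost every such induced factor is a matrix algebra of finite dimension. First, this forces $[\Ga:\GaFC]<\infty$: otherwise induction from the infinite-index subgroup $\GaFC$ produces generically infinite-dimensional factors whose commutants carry II$_1$ summands (via a standard crossed-product/Mackey-machine analysis), contradicting (iii). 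Once $[\Ga:\GaFC]<\infty$, the subalgebra $L(\GaFC)\subset L(\Ga)$ is itself of type I; since $\GaFC$ is an FC-group, B.~H.\ Neumann's structure theorem (an FC-group with bounded-dimensional irreducible representations has finite commutator subgroup and hence is virtually abelian) yields that $\GaFC$ is virtually abelian, and combining with the finite index gives (ii).

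The equivalence (v) $\Leftrightarrow$ (vi) follows by complement. By the same fibrewise dichotomy, $L(\Ga)$ is of type II precisely when $\Ga$ fails to be virtually abelian; unfolding this negation through the FC-center, $\Ga$ is virtually abelian iff $\GaFC$ has finite index in $\Ga$ \emph{and} has finite commutator subgroup, and the negation of this conjunction is precisely the stated dichotomy (vi). The principal technical obstacle in the whole scheme is the type analysis of the induced factor representations $\widetilde{\pi_{\mathcal O}}$ supplied by Theorem~\ref{Pro-PlancherelFC}; in particular, excluding II$_1$ components when $[\Ga:\GaFC]=\infty$ requires identifying the commutant of $\Ind_{\GaFC}^\Ga \pi_{\mathcal O}$ with a crossed product of $\pi_{\mathcal O}(\GaFC)''$ by (a quotient of) $\Ga/\GaFC$ and exploiting the $K_\Ga$-orbit structure provided by Theorem~\ref{Pro-PlancherelFC}.
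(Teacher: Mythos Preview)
Your overall framework via the Plancherel decomposition matches the paper's, but two steps contain genuine gaps.

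The most serious is your claim that $(v)\Leftrightarrow(vi)$ ``follows by complement'' because ``$L(\Ga)$ is of type II precisely when $\Ga$ fails to be virtually abelian.'' This is false: the negation of type~I is not type~II, since $L(\Ga)$ can be of mixed type. The infinite Heisenberg group of Section~\ref{Exa-Heisenberg} already illustrates this: it is an FC-group which is not virtually abelian, yet $[\Ga,\Ga]$ is finite and $L(\Ga)$ is the direct sum of an abelian von Neumann algebra and $p-1$ factors of type~II$_1$. Thus neither $(ii)$ nor $(vi)$ holds, and correspondingly neither $(iii)$ nor $(v)$. The paper therefore proves $(v)\Leftrightarrow(vi)$ directly: when $[\Ga:\GaFC]=\infty$ it shows that for $\mu$-almost every $t$ the factor $\pi_t(\Ga)''$ is infinite-dimensional by exhibiting an explicit orthonormal sequence $(\pi_t(\ga_n)\xi_t)_n$ (using that $t\equiv 0$ off $\GaFC$); when $[\Ga:\GaFC]<\infty$ it reduces to the FC-group $\GaFC$ and proves separately (Step~1 of the appendix) that for an FC-group $\la_\Ga$ is of type~II if and only if the commutator subgroup is infinite. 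Neither direction is a formal complement of the first block of equivalences.

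Second, your argument for $(iv)\Rightarrow(i)$ does not work as written. That a II$_1$ factor representation admits no finite-dimensional irreducible subrepresentation is true, but it admits no irreducible subrepresentation whatsoever, so the hypothesis that all irreducibles of $\Ga$ are finite-dimensional gives no contradiction; and you say nothing about factors of type~II$_\infty$ or type~III. The paper instead observes that $(iv)$ makes $\Ga$ a CCR group and invokes the general fact that CCR groups are of type~I.

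Your sketch of $(iii)\Rightarrow(ii)$ also diverges from the paper and is not self-contained. Once reduced to an FC-group, the paper does \emph{not} appeal to a structural ``Neumann theorem'' of the form you quote; instead it argues directly from the Plancherel decomposition: first it finds a finite-index subgroup with finite commutator (using that some $\Char(\Ga)_n$ has positive measure and that centralizers of finite sets have finite index in an FC-group), and then, with $[\Ga,\Ga]$ finite, it uses a character-restriction lemma (Lemma~\ref{Lem-CommutatorChar}) to show that $Z(\Ga)$ has finite index. The crossed-product/Mackey-machine route you propose may be workable but would need substantially more detail than you give.
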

Our proof of Theorem~\ref{Theorem-Thoma-Kaniuth}
 is not completely new as it uses several crucial ideas from \cite{Thoma1} and especially from \cite{Kaniuth}
 (compare with the remarks on p.336  after Lemma in \cite{Kaniuth}); however, we felt it could be useful
 to have a  short and common treatment of both results in the literature.
 Observe that the equivalence between (i)  and (iii) above does not carry over to non discrete groups (see \cite{Mackey2}).

\begin{remark}
\label{Rem-Theorem-Thoma-Kaniuth}
  Theorem~\ref{Theorem-Thoma-Kaniuth} holds also for   non countable discrete groups.
 %but the sake of simplicity we preferred to treat only countable ones.
Write such a group $\Ga$ as  $\Ga= \cup_{j} H_j$ for  a directed net  of countable subgroups $H_j.$
 If  $L(\Ga)''$ is not of type II (or is of type I), then $L(H_j)$ is not of type $II$ (or is of type I)
 for $j$ large enough. This is the crucial tool for the   extension of the proof of
 Theorem~\ref{Theorem-Thoma-Kaniuth} to  $\Ga;$ for more details, proofs of Satz 1 and Satz 2 in  \cite{Kaniuth}.

\end{remark}

This paper is organized as follows. In Section~\ref{CenterVNA}, we recall the well-known description
of the center of the von Neumann algebra of a discrete group. 
Sections~\ref{ProofPL} and \ref{SS:PlancherelFC} contains  the proofs of
Theorems~ \ref{Pro-Plancherel} and \ref{Pro-PlancherelFC}.
In Section~\ref{SS:Plancherel-Linear}, we prove Corollaries~\ref{Theo-Plancherel-Linear} and \ref{Cor-Theo-Plancherel-Linear}. Section~\ref{Examples} is devoted to the explicit computation of the Plancherel formula
for a few examples of countable groups. Appendix~\ref{S: ThomaKaniuth} contains the proof
of Theorem~\ref{Theorem-Thoma-Kaniuth}.

\noindent
{\bf Acknowledgement.}\ We are grateful to Pierre-Emmanuel Caprace and Karl-Hermann Neeb for  useful  comments
on the first version of this paper.

 \section[Center of group von Neumann algebra]{On the center of the group von Neumann algebra}
\label{CenterVNA}

Let $\Ga$ be a countable group. We will often use the following well-known description of the center
 $\mathcal Z=\la(\Ga)''\cap \la(\Ga)'$  of $L(\Ga)= \la_\Ga(\Ga)''$. 
 
 Observe that $\la_\Ga(H)''$ is a von Neumann subalgebra of $L(\Ga)$, for every subgroup $H$ of $\Ga.$ 
For $h\in \Ga_{\rm fc},$ we set
$$T_{[h]}:= \la_\Ga(\mathbf{1}_{[h]})=\sum_{x\in [h]} \la_\Ga(x) \in \la_\Ga(\Ga_{\rm fc})'',$$ where $[h]$ denotes the $\Ga$-conjugacy class of $h.$

\begin{lemma}
\label{Lem-Center}
The center $\mathcal Z$ of $L(\Ga)= \la_\Ga(\Ga)''$ coincides with the closure of  the linear span of 
$\{T_{[h]}\mid h\in \Ga_{\rm fc}\}$, for the strong operator  topology; in particular, 
$\mathcal Z$ is contained in $ \la_\Ga(\Ga_{\rm fc})''.$
\end{lemma}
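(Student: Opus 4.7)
The plan is to prove the two inclusions separately. For the easy direction, each $T_{[h]}$ with $h \in \GaFC$ is a finite sum $\sum_{x \in [h]} \la_\Ga(x)$, hence a bounded element of $\la_\Ga(\Ga)''$; conjugation by $\la_\Ga(\ga)$ merely permutes its summands (since $\ga [h] \ga^{-1} = [h]$), so $T_{[h]}$ commutes with every $\la_\Ga(\ga)$ and therefore lies in $\mathcal{Z}$. Since each $T_{[h]}$ is moreover a linear combination of $\la_\Ga(h')$ with $h' \in \GaFC$, the SOT-closure of $\operatorname{span}\{T_{[h]} : h \in \GaFC\}$ is contained in $\la_\Ga(\GaFC)''$, so the ``in particular'' clause will be automatic once the main identity is established.

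For the reverse inclusion, I would invoke the center-valued trace $E: L(\Ga) \to \mathcal{Z}$, which exists because $L(\Ga)$ is a finite von Neumann algebra with faithful normal tracial state $\tau(T) = \langle T \delta_e, \delta_e \rangle$. Under the standard identification $L^2(L(\Ga), \tau) \cong \ell^2(\Ga)$ sending $T \mapsto T\delta_e$, the map $E$ corresponds to the orthogonal projection onto $\overline{\mathcal{Z}\delta_e}$. A direct argument identifies this closure with the subspace of $\Ga$-conjugation invariant vectors in $\ell^2(\Ga)$: square-summability forces the support of any invariant vector to lie in $\GaFC$, and an orthogonal basis is given by $\{\mathbf{1}_{[h]}/\sqrt{\#[h]} : h \in \GaFC\}$. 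Projecting $\delta_\ga$ onto this basis yields
\[
E(\la_\Ga(\ga)) = \tfrac{1}{\#[\ga]}\, T_{[\ga]} \text{ if } \ga \in \GaFC, \qquad E(\la_\Ga(\ga)) = 0 \text{ otherwise,}
\]
so $E$ maps the group algebra $\CCC[\Ga] \subseteq L(\Ga)$ into $\operatorname{span}\{T_{[h]} : h \in \GaFC\}$.

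To finish, given $T \in \mathcal{Z}$, Kaplansky's density theorem supplies a net $(T_n) \subseteq \CCC[\Ga]$ with $\|T_n\| \leq \|T\|$ and $T_n \to T$ in SOT. Normality of $E$, combined with boundedness of the net, gives $E(T_n) \to E(T) = T$ in SOT, and each $E(T_n)$ is a finite linear combination of the $T_{[h]}$; hence $T$ lies in the SOT-closure of $\operatorname{span}\{T_{[h]} : h \in \GaFC\}$. The main technical obstacle I would watch for is that a naive $\ell^2$-truncation of $f := T\delta_e = \sum_{[h]} f(h)\mathbf{1}_{[h]}$ only produces approximants converging to $T\delta_\eta$ for each $\eta$, with no a priori uniform operator-norm bound; the combined use of Kaplansky density (to furnish bounded group-algebra approximants of $T$) and normality of $E$ (to project them into $\operatorname{span}\{T_{[h]}\}$ while preserving SOT-convergence) is what sidesteps this difficulty cleanly.
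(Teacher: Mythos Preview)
Your argument is correct, but it takes a genuinely different route from the paper's. The paper proceeds by a direct bicommutant computation: after noting that $\operatorname{span}\{T_{[h]}\}$ is a unital $*$-algebra (so its SOT-closure is its bicommutant), it takes $T\in\mathcal Z$, writes $f:=T\delta_e$ as an $\ell^2$-convergent series $\sum_{[h]} c_{[h]}\mathbf 1_{[h]}$ (using exactly the conjugation-invariance observation you also make), and then checks by hand that $ST=TS$ for every $S\in\{T_{[h]}\}'$ via the identities $T\delta_x=\rho_\Ga(x)f$ and $T^*\delta_y=\rho_\Ga(y)\overline f$. No conditional expectation or Kaplansky density is invoked.

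Your approach instead packages the same $\ell^2$-projection onto conjugation-invariant functions as the center-valued conditional expectation $E$, and then uses Kaplansky density to feed bounded group-algebra approximants through $E$. This is heavier in prerequisites but yields the pleasant by-product of an explicit formula $E(\la_\Ga(\ga))=\tfrac{1}{\#[\ga]}T_{[\ga]}$ (or $0$) for the center-valued trace on group elements. One small point worth making explicit in your write-up: the step ``$E(T_n)\to E(T)$ in SOT'' follows because, on bounded subsets of $L(\Ga)$, SOT coincides with the $\Vert\cdot\Vert_2$-topology induced by the cyclic and separating vector $\delta_e$, and $E$ acts as the orthogonal projection in $\ell^2(\Ga)$; normality alone gives only $\sigma$-weak continuity, which would then require invoking that SOT- and WOT-closures of convex sets agree.
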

\begin{proof}

It is clear that  $T_{[h]} \in \mathcal Z$ for every $h\in \Ga_{\rm fc}.$
Observe also that the linear span of $\{T_{[h]}\mid h\in \Ga_{\rm fc}\}$
is a unital selfadjoint algebra; indeed, $T_{[h^{-1}]}= T_{[h]}^*$ for every 
$h\in  \Ga_{\rm fc}$ and $\{\mathbf{1}_{[h]}\mid h\in \Ga_{\rm fc}\}$ is a 
vector space basis of the algebra $\CCC[ \Ga_{\rm fc}]^\Ga$ of $\Ga$-invariant
functions in $\CCC[\GaFC]$.

Let $T\in \mathcal Z$. We have to show that $T\in \{T_{[h]}\mid h\in \Ga_{\rm fc}\}''.$
For every $\ga \in \Ga,$ we have 
$$
\begin{aligned}
\lambda_\Gamma(\gamma) \rho_\Gamma(\gamma) (T\delta_e)
\, = \, ( \lambda_\Gamma(\gamma) \rho_\Gamma(\gamma)T) \delta_e
\, = \, (T \lambda_\Gamma(\gamma) \rho_\Gamma(\gamma)) \delta_e
\, = \, T \delta_e.
\end{aligned}
$$
and this shows that  $f:=T\delta_e,$ which is a function in $\ell^2(\Gamma),$ is 
invariant under conjugation by $\ga.$
The support of  $f$ is therefore contained in $\Ga_{\rm fc}.$

Write $f= \sum_{[h]\in \mathcal C} c_{[h]}\mathbf{1}_{[h]}$ for a sequence
$(c_{[h]})_{[h]\in \mathcal C}$ of complex numbers  with $\sum_{[h]\in \mathcal C} \# [h]|c_{[h]}|^2 <\infty,$
where $\mathcal C$ is a set of representatives for the $\Ga$-conjugacy classes in  $\Ga_{\rm fc}.$
Let $\rho_\Ga$ be the right regular representation of $\Gamma$.
Since $T\in  \la_\Ga(\Ga)''$ and $\rho_\Ga(\Ga)\subset \la_\Ga(\Ga)'$,
we have, for every $x\in \Ga$,
$$T(\delta_x)= T\rho_\Ga(x) (\delta_e)= \rho_\Ga(x) (f)= 
\rho_\Ga(x) \left( \sum_{[h]\in \mathcal C} c_{[h]}\mathbf{1}_{[h]}\right)= 
\sum_{[h]\in \mathcal C} c_{[h]} T_{[h]}(\delta_x),$$ where the last sum is convergent in $\ell^2(\Ga).$
We also have $T^*(\delta_x)=\sum_{[h]\in \mathcal C}  \overline{c_{[h]}} T_{[h^{-1}]}(\delta_x).$

 Let $S\in \{T_{[h]}\mid h\in \Ga_{\rm fc}\}'.$ For every $x, y\in \Ga,$ 
 we have
$$
\begin{aligned}
\langle ST(\delta_x)\mid \delta_y\rangle&=\langle S\left(\sum_{[h]\in \mathcal C} c_{[h]} T_{[h]}(\delta_x)\right)\mid \delta_y\rangle=\langle\sum_{[h]\in \mathcal C} c_{[h]} ST_{[h]}(\delta_x)\mid \delta_y\rangle\\
&= \sum_{[h]\in \mathcal C}c_{[h]} \langle T_{[h]}S(\delta_x)\mid \delta_y\rangle=
\langle S(\delta_x)\mid \sum_{[h]\in \mathcal C} \overline{c_{[h]}}T_{[h^{-1}]}(\delta_y)\rangle\\
&=\langle S(\delta_x)\mid T^*(\delta_y)\rangle=\langle TS(\delta_x)\mid \delta_y)\rangle\\
  \end{aligned}
 $$
and it follows that $ST=TS.$ 
\end{proof}

The following well-known corollary shows that  the Plancherel measure is 
the Dirac measure at $\delta_e$ in the case where $\Ga$ is ICC group, that is, when $\GaFC=\{e\}.$
\begin{corollary}
\label{Cor-Lem-Center}
 Assume that $\Ga$ is ICC. Then 
$L(\Ga)=\la_\Ga(\Ga)''$ is a factor.
\end{corollary}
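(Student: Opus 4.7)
The plan is to derive this as an immediate consequence of Lemma~\ref{Lem-Center}, using the hypothesis $\GaFC = \{e\}$ to collapse the description of the center to the scalars.

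First I would invoke Lemma~\ref{Lem-Center}, which identifies the center $\mathcal Z$ of $L(\Ga)$ with the strong-operator closure of the linear span of $\{T_{[h]} \mid h \in \GaFC\}$, and in particular locates $\mathcal Z$ inside $\la_\Ga(\GaFC)''$. Since $\Ga$ is ICC by assumption, we have $\GaFC = \{e\}$, so the only conjugacy class in $\GaFC$ is $[e] = \{e\}$ and the corresponding operator is $T_{[e]} = \la_\Ga(e) = I$. Thus the spanning set reduces to $\{I\}$, and its closure is $\CCC \cdot I$.

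Equivalently, one could argue via the inclusion $\mathcal Z \subset \la_\Ga(\GaFC)'' = \la_\Ga(\{e\})'' = \CCC \cdot I$; either way, $\mathcal Z = \CCC \cdot I$, which is exactly the statement that $L(\Ga)$ is a factor. There is no real obstacle here: the whole content of the corollary is already packaged inside Lemma~\ref{Lem-Center}, and the ICC hypothesis is used only to kill every generator of the center except the identity. The proof should therefore be a single short paragraph.
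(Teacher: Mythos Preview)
Your proposal is correct and matches the paper's approach: the corollary is stated immediately after Lemma~\ref{Lem-Center} with no separate proof, precisely because it follows at once from the observation that $\GaFC=\{e\}$ forces the generating set $\{T_{[h]}\mid h\in\GaFC\}$ to reduce to $\{I\}$. Your one-paragraph argument is exactly what is intended.
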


\section{Proof of Theorem~\ref{Pro-Plancherel}}
\label{ProofPL}

Consider  a direct integral decomposition 
$ \int_X^\oplus \pi_x d\mu(x)$
of $\la_\Ga$ associated to the centre $\mathcal Z$  of $L(\Ga)= \la_\Ga(\Ga)''$
(see [8.3.2]\cite{Dixm--C*});  so,  $X$ is a standard Borel space 
equipped with a probability measure $\mu$
and  $(\pi_x, \Hi_x)_{x \in X}$ is measurable field of   representations of $\Ga$
over $X$, such that there exists an isomorphism of Hilbert spaces
$$U \, \colon \ell^2(\Gamma) \to \int_X^\oplus \Hi_x d\mu(x)$$
which transforms $\la_\Ga$ into $ \int^\oplus_{X} \pi_xd\mu(x)$
and for which $U \mathcal Z U^{-1}$ is
the algebra of diagonal operators on $\int_X^\oplus \Hi_x d\mu(x)$.
(Recall that a diagonal operator on $\int_{X}^\oplus \H_x d \mu(x)$ 
is an operator of the form $\int_{X}^\oplus \vfi(x) I_{\H_x} d \mu(x)$
for an essentially bounded measurable function $\vfi: X\to \CCC$.)

Then, upon disregarding a subset of $X$ of $\mu$-measure 0,  the  following  holds (see [8.4.1]\cite{Dixm--C*}):
\begin{itemize}
\item[(1)] $\pi_x$ is a factor representation for every $x\in X;$
\item[(2)] $\pi_x$ and $\pi_y$ are disjoint
for every $x, y \in X$ with $x \ne y$;
\item[(3)] we have $U \la_\Ga(\Ga)'' U^{-1} = \int^\oplus_{X} \pi_x(\Ga)'' d\mu(x).$
\end{itemize}

Let $\rho_\Ga$ be the right regular representation of $\Gamma$.
Let $\ga\in \Gamma.$ 
Then $U \rho_\Ga(\gamma) U^{-1}$ commutes with every  diagonalisable operator
on $\int_X^\oplus \Hi_x d\mu(x)$,  since $\rho_\Ga(\ga)\in L(\Ga)'$.
 It follows (see \cite[Chap. II, \S 2, No 5, Th\'eor\`eme 1]{Dixm--vN}) that $U \rho_\Ga(\gamma) U^{-1}$ 
is a decomposable operator, that is, there exists a 
measurable field of unitary operators $x \mapsto \sigma_x(\ga)$ such that 
$U \rho_\Gamma(\gamma) U^{-1}= \int^\oplus_{X} \sigma_x(\gamma) d\mu(x).$
So, we have  a measurable field $x \mapsto \sigma_x$ of representations of $\Gamma$
in $\int_X^\oplus \Hi_x d\mu(x)$ such that
$$
U \rho_\Ga(\gamma) U^{-1} \, = \, \int^\oplus_{X} \sigma_x(\gamma) d\mu(x)
\hskip.5cm \text{for all} \hskip.2cm
\gamma \in \Gamma.
$$

Let $(\xi_x)_{x \in X} \in \int_X^\oplus \Hi_x d\mu(x)$ be the image of 
 $\delta_e\in \ell^2(\Ga)$
under $U.$
We claim that $\xi_x$ is a cyclic vector for $\pi_x$ and $\sigma_x,$
for $\mu$-almost every $x \in X$.
Indeed,  since $\delta_e\in \ell^2(\Ga)$ is a cyclic vector for both $\lambda_\Gamma$
and $\rho_\Gamma,$ 
$$\{(\pi_x(\gamma)\xi_x)_{x\in X}\mid \gamma \in \Gamma\} \quad \text{and} \quad \{(\sigma_x(\gamma)\xi_x)_{x\in X}\mid \gamma \in \Gamma\}$$ 
are countable total subsets of $\int_X^\oplus \Hi_x d\mu(x)$
and the claim follows from a general fact about direct integral of Hilbert spaces
(see Proposition 8 in Chap. II, \S 1 of \cite{Dixm--vN}).

Since $\lambda_\Gamma(\gamma) \delta_e = \rho_\Gamma(\gamma^{-1}) \delta_e$
for every $\gamma \in \Gamma$ and since $\Gamma$ is countable, upon neglecting a subset of $X$ of $\mu$-measure 0, 
we can assume that 
\begin{itemize}
\item[(4)]
$\pi_x(\gamma) \xi_x = \sigma_x(\gamma^{-1}) \xi_x$;
\item[(5)]
$\pi_x(\gamma) \sigma_x(\gamma') = \sigma_x(\gamma') \pi_x(\gamma)$;
\item[(6)]
$\xi_x$ is a cyclic vector for  both $\pi_x$ and $\sigma_x$,
\end{itemize}
for all $x \in X$ and all $\gamma, \gamma' \in \Gamma$. 

Let  $x \in X$ and  let $\varphi_x$ be the function of positive type on $\Gamma$ defined by 
$$
\varphi_x(\gamma) \, = \, \langle \pi_x(\gamma) \xi_x \mid \xi_x \rangle
\hskip.5cm \text{for every } \hskip.2cm
\gamma \in \Gamma.
$$
We claim that $\vfi_x\in \Char(\Ga).$ Indeed, using (4) and (5), we have, for every $\gamma_1, \gamma_2 \in \Gamma$,
$$
\begin{aligned}
\varphi_x(\gamma_2 \gamma_1 \gamma_2^{-1})
&= 
\langle \pi_x(\gamma_2 \gamma_1 \gamma_2^{-1}) \xi_x \mid \xi_x \rangle
= 
\langle \pi_x(\gamma_2 \gamma_1) \sigma_x(\gamma_2) \xi_x \mid \xi_x \rangle
\\
&= 
\langle \sigma_x(\gamma_2) \pi_x(\gamma_2\gamma_1) \xi_x \mid \xi_x \rangle
=
\langle \pi_x( \gamma_1) \xi_x \mid \pi_x (\gamma_2^{-1}) \sigma_x(\gamma_2^{-1}) \xi_x \rangle
\\
&=
\langle \pi_x( \gamma_1) \xi_x \mid \xi_x \rangle
=
\varphi_x(\gamma_1).
\end{aligned}
$$
So, $\vfi_x$ is conjugation invariant and hence 
$\vfi_x\in \Tr(\Ga).$  Moreover, $\vfi_x$ is an extreme point in $\Tr(\Ga),$ 
since $\pi_x$ is factorial and $\xi_x$ is a cyclic vector for $\pi_x.$

Finally, since $U \colon \ell^2(\Gamma) \to \int_X^\oplus \Hi_x d\mu(x)$ is an isometry, we have for every 
$f\in \CCC[\Ga],$
$$
\begin{aligned}
\Vert f\Vert^2&= f^*\ast f(e)= \langle \la_\Ga(f^*\ast f) \delta_e\mid \delta_e \rangle\\
&=\Vert \la_\Ga(f) \delta_e\Vert^2= \Vert U(\la_\Ga(f) \delta_e)\Vert^2\\
&=\int_X\Vert \pi_x(f) \xi_x\Vert^2 d\mu(x)= \int_X\vfi_x(f^*\ast f) d\mu(x).
\end{aligned}
$$

The measurable map $\Phi:X\to \Char(\Ga)$ given by 
$\Phi(x)= \varphi_x$ is  injective, since 
$\pi_x$ and $\pi_y$ are disjoint by (2) and hence $\varphi_x\neq \varphi_y$
for $x, y \in X$ with $x \ne y$.
It follows that $\Phi(X)$ is a Borel subset of $\Char(\Ga)$ and that
$\Phi$ is a Borel isomorphism between $X$ and $\Phi(X)$ 
(see \cite[Theorem 3.2]{Mackey-Borel}).
Pushing forward  $\mu$ to $\Char(\Ga)$ by $\Phi$,
we  can therefore assume without 
loss of generality that $X=\Char(\Ga)$ and that $\mu$
is a probability measure on $\Char(\Ga)$. 
With this identification,  it is clear that  Items (i), (ii) and (iii) of Theorem~\ref{Pro-Plancherel}
are satisfied and that the Plancherel formula holds.

It remains to show the uniqueness of $\mu.$
Let $\nu$ any probability measure on $\Char(\Ga)$ such that 
the Plancherel formula.
By polarization, we have then
$\delta_e= \int_{\Char(\Ga)}t d\nu(t)$, which is an integral decomposition 
of $\delta_e\in \Tr(\Ga)$ over extreme points of the  convex set $\Tr(\Ga).$
The uniqueness of such a decomposition implies that $\nu=\mu.$

\begin{remark}
\label{SupportPlancherel}
(i) For $\mu$-almost every $t\in \Char(\Ga),$ we have $t=0$ on $ \Ga\setminus \Ga_{\rm fc}.$ 
Indeed, let $\gamma \notin \GaFC$.  Then  
$\langle \la_\Ga(\ga) \la_\Ga(h)\delta_e\mid \delta_e\rangle=0$
for every $h\in \Ga_{\rm fc}$ and hence 
$$
\leqno{(*)}\qquad \langle \la_\Ga(\ga) T\delta_e\mid \delta_e\rangle=0 \tout T\in \la_\Ga(\Ga_{\rm fc})''.
$$
With the notation as in the proof above, let $E$ be a  Borel subset of $X.$
Then $T_E:=U^{-1}P_E U$ is a projection  in $\mathcal Z,$  where $P_E$ is the diagonal  operator 
$\int_X^\oplus \mathbf{1}_E(x) I_{\H_x} d\mu(x)$. It follows from Lemma~\ref{Lem-Center} and $(*)$
that 
$$
\int_E \vfi_x(\ga) d\mu(x)=\langle T_E\la_\Ga(\ga)\delta_e\mid \delta_e\rangle=\langle \la_\Ga(\ga)T_E\delta_e\mid \delta_e\rangle=0.
$$
Since this holds for every Borel subset $E$ of $X,$
this implies that $\varphi_x(\gamma) = 0$ for $\mu$-almost every $x \in X$.

As $\Gamma$ is countable, for $\mu$-almost every $x \in X$, we have $\varphi_x(\gamma) = 0$ for every $\gamma \notin \GaFC$.
\par

\noindent
(ii) Let $\la_\Ga=\int_{\Char(\Ga)}^\oplus \pi_t d\mu(t)$, $\rho_\Ga=\int_{\Char(\Ga)}^\oplus \sigma_t d\mu(t)$, and 
$\delta_e= (\xi_t )_{t\in \Char(\Ga)}$  be the   decompositions as above.
For $\mu$-almost every $t\in\Char(\Ga),$  the linear map  $$\pi_t(\Ga)''\to  \H_t, \,   T\mapsto  T\xi_t$$
is injective. Indeed, this follows from the fact that 
$\xi_t$ is cyclic for $\sigma_t$ and that $\sigma_t(\Ga)\subset \pi_t(\Ga)'.$

\end{remark}

\section{Proof of Theorem~\ref{Pro-PlancherelFC}}
\label{SS:PlancherelFC}
Set  $N:= \Ga_{\rm fc}$ and $X:=\Char(N).$
Consider  the direct integral decomposition 
$\int_X^\oplus \pi_t d\nu(t)$ of $\la_{N}$
into factor representations $(\pi_t, \K_t)$ of  $N$
with corresponding traces $t\in X,$ as in Theorem~\ref{Pro-Plancherel}.

Let $K_\Ga$ be the compact group which is the closure 
in $\Aut(N)$ of $\Ad(\Ga)|_{N}$.
Since the quotient space 
$X/K_\Ga$ is a standard Borel space,
there  exists  a Borel section $s:X/K_\Ga \to X$
for the projection map $X\to X/K_\Ga.$
Set $\Omega:= s(X/K_\Ga)$.Then $\Omega$ is a Borel transversal for 
$X/K_\Ga$.
The Plancherel measure $\nu$ can  accordingly be decomposed over $\Omega:$  we have
$$
\nu(f)=\int_{\Omega} \int_{\mathcal{O}_\omega}f(t)dm_{\omega}(t)d\dot{\nu}(\omega)
$$
for every bounded measurable function $f$ on $\Char(\GaFC),$ 
where $m_{\omega}$ be the unique normalized $K_\Ga$-invariant probability measure
 on the $K_\Ga$-orbit $\mathcal O_\omega$ of $\omega$ and $\dot{\nu}$ is the image of $\nu$ under $s.$
 
 Let $\omega\in \Omega$ and set 
 $$ \pi_{\mathcal O_\omega}:=\int_{\mathcal O_\omega}^\oplus \pi_t dm_{\omega}(t),$$
which is a unitary representation of $N$ on the Hilbert space  
$$\K_\omega:=\int_{\mathcal O_\omega}^\oplus \K_t dm_{\omega}(t).$$

For $g\in \Aut(N),$ let  $\pi_{\mathcal O_\omega}^g$ be the conjugate representation of $N$ on $\K_\omega$
given by $\pi_{\mathcal O_\omega}^g(h)= \pi_{\mathcal O_\omega}(g(h))$ for $h\in N.$

 \vskip.5cm
 \textbf{Step  1}   There exists a unitary representation $U_\omega: g\mapsto U_{\omega,g}$ of $K_\Ga$
 on $\K_\omega$ 
  such that 
 $$U_{\omega,g} \pi_{\mathcal O_\omega}(h) U_{\omega,g}^{-1}= \pi_{\mathcal O_\omega}(g(h))
 \qquad\text{for all} \quad g\in K_\Ga, h\in N;$$
 in particular,
 $\pi_{\mathcal O_\omega}^g$ is equivalent to  $\pi_{\mathcal O_\omega}$ for every $g\in K_\Ga.$

Indeed, observe that the representations $\pi_t$ for $t\in \mathcal O_\omega$ are  conjugate  to each other (up to equivalence) and  may therefore be considered as defined on the same Hilbert space.

Let $g\in K_\Ga.$ Then  $\pi_{\mathcal O_\omega}^g$  is equivalent to 
$\int_{\mathcal O_\omega}^\oplus {\pi_t}^g dm_{\omega}(t).$
Define a linear operator $U_{\omega,g}: \K_\omega\to\K_\omega$ by 
$$
U_{\omega,g}\left((\xi_t)_{t\in \mathcal O_\omega}\right)= (\xi_{t^g})_{t\in \mathcal O_\omega}
\tout (\xi_t)_{t\in \mathcal O_\omega}\in \K_\omega.
$$
Then $U_{\omega,g}$ is an isometry, by $K_\Ga$-invariance of the measure $m_\omega.$
It is readily checked that $U_{\omega}$ intertwines $\pi_{\mathcal O_\omega}$ and $\pi_{\mathcal O_\omega}^g$
and that $U_{\omega}$ is a homomorphism.
To show that $U_{\omega}$ is a representation of $K_\Ga,$
it remains to prove that  $g\mapsto U_{\omega,g}\xi$ is continuous for every  $\xi\in \K_\omega.$

For this, observe that $\K_\omega$ can be identified with  the Hilbert space $L^2(\mathcal O_\omega, m_\omega) \otimes \K$,
where $\K$ is the common Hilbert space of the $\pi_t$'s for $t\in \mathcal O_\omega;$
under this identification, $U_{\omega}$ 
corresponds to $\kappa\otimes I_{\K}$, where $\kappa$ is the Koopman representation of $K_\Ga$ on $L^2(\mathcal O_\omega, m_\omega)$ associated to the action $K_\Ga\curvearrowright \mathcal O_\omega$ (for the fact that 
$\kappa$ is indeed a representation of $K_\Ga,$  see \cite[A.6]{BHV}) and the claim follows.
 
\vskip.7cm
 Next, let 
 $$\widetilde{\pi_{\mathcal O_\omega}}:=\Ind_{N}^\Ga \pi_{\mathcal O_\omega}$$
 be the  representation of $\Ga$ induced by $\pi_{\mathcal O_\omega}.$ 
 
 We recall how 
  $\widetilde{\pi_{\mathcal O_\omega}}$ can be realized 
on $\ell^2(R, \K_\omega)= \ell^2(R) \otimes  \K_\omega $, where $R\subset \Ga$ is a set of representatives  for the cosets
of $N$ with $e\in R.$  For every $\ga \in \Ga$ and $r \in R$, 
let $c(r, \ga)\in N$ and $r\cdot \ga\in R$ be such that
$r\ga= c(r, \ga)r\cdot \ga.$
Then $\widetilde{\pi_{\mathcal O_\omega}}$ is given  on $\ell^2(R, \Ki_\omega)$ by
$$
(\widetilde{\pi_{\mathcal O_\omega}}(\ga) F) (r) \, = \, \pi_{\mathcal O_\omega} (c(r, \ga)) (F(r\cdot \ga)) 
\tout F\in \ell^2(R, \Ki_\omega).
$$
\vskip.5cm
 \textbf{Step  2} We claim that
 there exists  a unitary map 
$$\widetilde{U_{\omega}}: \ell^2(R, \K_\omega) \to \ell^2(R, \K_\omega)$$
which  intertwines   the representation $I_{ \ell^2(R)} \otimes \pi_{\mathcal O_\omega}$ and the restriction 
 $\widetilde{\pi_{\mathcal O_\omega}}|_N$ of $\widetilde{\pi_{\mathcal O_\omega}}$ to $N$;
 moreover, $\omega \to \widetilde{U_{\omega}}$ is a measurable field of unitary operators on $\Omega.$

Indeed, we have an orthogonal 
decomposition 
$$\ell^2(R, \Ki_\omega)= \oplus_{r\in R}(\delta_r\otimes \K_\omega)$$
into $\widetilde{\pi_{\mathcal O_\omega}}(N)$-invariant; moreover, the action of $N$ 
on every copy $\delta_r\otimes \K_\omega$ is given by  $ \pi_{\mathcal O_\omega}^r$. 
For every $r\in R,$ the unitary operator
$U_{\omega,r }: \K_\omega\to \K_\omega$ from Step 1 intertwines $\pi_{\mathcal O_\omega}$ and $\pi_{\mathcal O_\omega}^r.$
In view of the explicit formula of $U_{\omega,r },$ the  field $\omega \to U_{\omega,r}$ is measurable on $\Omega.$

Define a unitary operator $\widetilde{U_{\omega}}: \ell^2(R, \Ki_\omega) \to \ell^2(R, \Ki_\omega)$ by 
$$\widetilde{U_{\omega}} (\delta_r \otimes \xi)= \delta_r\otimes U_{\omega,r }(\xi) \tout \xi \in \K_\omega.$$
Then $\widetilde{U_{\omega}}$ intertwines $I_{ \ell^2(R)} \otimes \pi_{\mathcal O_\omega}$ and  $\widetilde{\pi_{\mathcal O_\omega}}|_N$; moreover, $\omega \to \widetilde{U_{\omega}}$ is a measurable field on $\Omega.$

\vskip.7cm

Observe that the representation $\la_N$ is equivalent to  $\int_{\Omega}^\oplus  \pi_{\mathcal O_\omega} d\dot{\nu}(\omega).$
 Since $\la_\Ga$ is equivalent  to $\Ind_N^\Ga \la_N,$ it follows that $\la_\Ga$ is equivalent to $\int_{\Omega}^\oplus \widetilde{\pi_{\mathcal O_\omega}} d\dot{\nu}(\omega).$

 In the sequel, we will identify  the representations  $\la_N$ on $\ell^2(N)$ and $\la_\Ga$
 on $\ell^2(\Ga)$ with respectively the representations 
$$
\int_{\Omega}^\oplus \pi_{{\mathcal O}_\omega}d \dot{\nu}(\omega)
\quad \text{on}\quad
 \K:=\int_{\Omega}^\oplus  \K_\omega d\dot{\nu}(\omega)$$
and 
 $$
 \int_{\Omega}^\oplus \widetilde{\pi_{\mathcal O_\omega}} d\dot{\nu}(\omega) 
\quad \text{on}\quad
 \H:=\int_{\Omega}^\oplus  \ell^2(R, \K_\omega) d\dot{\nu}(\omega).
 $$

 \vskip.5cm
 \textbf{Step 3} The representations $\widetilde{\pi_{\mathcal O_\omega}}$ are factorial 
and are mutually disjoint, outside a subset of $\Omega$ of $\dot{\nu}$-measure 0.

To show this, it suffices to prove (see \cite[8.4.1]{Dixm--C*}) that  the algebra 
$\D$ of diagonal operators in $\L(\H)$ coincides with the center $\Z$ of $\la_\Ga(\Ga)''$. 

Let us first prove that $D\subset \Z.$ For this, we only have to prove
 that  $\D \subset \la_\Ga(\Ga)'',$ since it is clear that $\D \subset \la_\Ga(\Ga)'.$

 By Step 2, for every $\omega\in \Omega,$  there exists a measurable field $\omega \to \widetilde{U_{\omega}}$
 of unitary operators
 $\widetilde{U_{\omega}}: \ell^2(R, \K_\omega) \to \ell^2(R, \K_\omega)$ intertwining  $I_{ \ell^2(R)} \otimes \pi_{\mathcal O_\omega}$
 and $\pi_{\mathcal O_\omega}|_N$.
So, 
$$\widetilde{U}:= \int_{\Omega}^\oplus  \widetilde{U_{\omega}} d\dot{\nu}(\omega)$$
 is a unitary operator on $\H$ 
which intertwines $I_{ \ell^2(R)} \otimes \la_N$ and $\la_\Ga|_N$;
it is obvious that $\widetilde{U}$ commutes with the diagonal operators on $\H.$

Let $\vfi: \Omega\to \CCC$ be a measurable essential bounded function on $\Omega$. 
By Theorem~\ref{Pro-Plancherel}.ii, the   corresponding diagonal operator 
$$T=\int_{\Omega}^\oplus \vfi(\omega) I_{\K_\omega} d\dot{\nu}(\omega)$$
on $\K$ belongs to   $\la_N(N)''.$ 
For the  corresponding diagonal operator  
$$\widetilde T=\int_{\Omega}^\oplus \vfi(\omega) I_{\ell^2(R, \K_\omega)} \dot{\nu}(\omega)$$ on 
 $\H$, we have $\widetilde{T}= I_{ \ell^2(R)} \otimes T$.
 So, $ \widetilde{T}$ belongs to $(I_{ \ell^2(R)} \otimes \la_N)(N)''.$
 Since, $\widetilde{U}$ commutes with $\widetilde{T}$ and intertwines   $I_{ \ell^2(R)} \otimes \la_N$ and $\la_\Ga|_N$,
 it follows that 
 $$\widetilde{T}= \widetilde{U}( I_{ \ell^2(R)} \otimes T)\widetilde{U}^{-1}\in \la_\Ga(N)''\subset \la_\Ga(\Ga)''.$$
 
 So, we have shown that $\D \subset \Z.$ 
 Observe that this implies (see Th\'eor\`eme 1 in Chap. II, \S3 of \cite{Dixm--vN})
  that $L(\Ga)$ is  the direct integral $ \int_{\Omega}^\oplus \widetilde{\pi_{\mathcal O_\omega}}(\Ga)''d\dot{\nu}(\omega)$
  and  that
  $\Z$ is the direct integral $ \int_{\Omega}^\oplus \Z_\omega d\dot{\nu}(\omega) ,$
 where $\Z_\omega$ is the center of $\widetilde{\pi_{\mathcal O_\omega}}(\Ga)''.$

Let  $\widetilde{T}\in \Z.$ Then $\widetilde{T}\in \la_\Ga(N)'',$
by Lemma~\ref{Lem-Center}. So, 
$\widetilde{T}= \int_{\Omega}^\oplus T_\omega d\dot{\nu}(\omega),$
where $T_\omega$ belongs to the center of  $\widetilde{\pi_{\mathcal O_\omega}}(N)'',$ for $\dot{\nu}$-almost every $\omega.$ Since $\widetilde{\pi_{\mathcal O_\omega}}|_N$ is equivalent to $I_{ \ell^2(R)} \otimes \pi_{\mathcal O_\omega}$ 
and since $ \pi_{\mathcal O_\omega}$ and hence $I_{ \ell^2(R)} \otimes \pi_{\mathcal O_\omega}$
is a factor representation, it follows that $T_\omega$ is a scalar operator, for  $\dot{\nu}$-almost every $\omega.$
 So,  $\widetilde{T}\in \D.$

\vskip.7cm

As a result, we have a  decomposition 
$$\la_\Ga=\int_{\Omega}^\oplus \widetilde{\pi_{\mathcal O_\omega}} d\dot{\nu}(\omega)$$
of $\la_\Ga$ as a direct integral of  pairwise disjoint factor representations.
By the argument of the proof of Theorem~\ref{Pro-Plancherel},
it follows that, for  $\dot{\nu}$-almost every $\omega\in X,$
 there exists a cyclic unit vector $\xi_\omega\in\ell^2(R, \K_\omega)$ 
 for $\widetilde{\pi_{\mathcal O_\omega}}$ so that 
 $$\vfi_\omega:= \langle \widetilde{\pi_{\mathcal O_\omega}}(\cdot)\xi_\omega\mid \xi_\omega\rangle$$
  belongs to $\Char(\Ga)$. In particular, 
 $$\widetilde{\mathcal M_\omega}:=\widetilde{\pi_{\mathcal O_\omega}}(\Ga)''$$ 
 is a factor of type II$_1$ and its normalized trace 
 is the extension of $\vfi_\omega$ to    $\widetilde{\mathcal M_\omega}$, which we again denote by 
 $\vfi_\omega.$ Our next goal is to  determine $\vfi_\omega$ in terms of the character 
 $\omega\in \Char(N).$

 Fix $\omega \in \Omega$ such that $\widetilde{\mathcal M_\omega}$ is a factor. 
We identity the Hilbert space $\K_\omega$ of $ \pi_{\mathcal O_\omega}$ with the subspace
 $\delta_e\otimes \K_\omega$ and so $ \pi_{\mathcal O_\omega}$ with a subrepresentation of the restriction 
 of  $\widetilde{\pi_{\mathcal O_\omega}}$ to $N.$
 
 Let  $\eta_\omega\in \K_\omega$  be a cyclic vector for $\pi_{\mathcal O_\omega}$
 such  that 
 $$\omega=\langle\pi_{\mathcal O_\omega}(\cdot) \eta_\omega\mid \eta_\omega\rangle.$$
 For $g\in K_\Ga,$ define a normal state $\psi_{\omega,g}$ on  $\widetilde{\mathcal M_\omega}$
  by the formula
  $$
 \psi_{\omega,g}(\widetilde T)= \langle\widetilde T U_{g, \omega}^{-1} \eta_\omega\mid U_{g, \omega}^{-1}\eta_\omega\rangle \tout \widetilde T\in \widetilde{\mathcal M_\omega},
  $$
  where $U_{g, \omega}$ is the unitary operator on $\K_\omega$ from Step 1.
 
Consider the linear functional $\psi_\omega:\widetilde{\mathcal M_\omega}\to \CCC$  given by 
  $$\psi_\omega (\widetilde T)= 
   \int_{K_\Ga}   \psi_{\omega,g}(\widetilde T) dm(g) \tout \widetilde T\in 
  \widetilde{\mathcal M_\omega},$$
where $m$ is the normalized Haar measure on $K_\Ga.$

 \vskip.5cm
 \textbf{Step 4} We claim that ${\psi_\omega}$ is a normal state on 
 $\widetilde{\mathcal M_\omega}$
 
 Indeed, it is clear that ${\psi_\omega}$ is a state on  
 $\widetilde{\mathcal M_\omega}.$
 % the fact that ${\psi_\omega}$ is a $K_\Ga$-invariant state on 
 %$\widetilde{\mathcal M_\omega}$ is clear.
  Let $(\widetilde T_n)_{n}$ be an increasing sequence
 of positive operators  in  $\widetilde{\mathcal M_\omega}$ with $\widetilde T= \sup_{n}\widetilde T_n \in \widetilde{\mathcal M_\omega}$.
 
 For every $g\in K_\Ga,$ the sequence $(\psi_{\omega,g}(\widetilde T_n))_n$ is increasing 
 and its  limit is $ \psi_{\omega,g}(\widetilde T)$. It follows from the monotone convergence theorem
 that 
 $$
 \lim_n \psi_\omega(T_n)= \lim_n\int_{K_\Ga}\psi_{\omega,g}(T_n) dm(g)= \int_{K_\Ga} \psi_{\omega,g}(T) dm(g)= \psi_\omega(T).
 $$
 So, $\psi_\omega$ is normal, as $\widetilde{\mathcal M_\omega}$ acts on a separable Hilbert space.
 %Moreover, $\psi_\omega$  is a trace on $\widetilde{\mathcal M_\omega}$, since it  is $\Ad(\Ga)$-invariant and normal.

    \vskip.5cm
 \textbf{Step 5} We claim that, 
 writing $\ga$ instead of $\widetilde{\pi_{\mathcal O_\omega}}(\ga)$ for $\ga \in \Ga$, we have 
 $$
 \psi_\omega(\ga)=
 \begin{cases} \int_{K_\Ga} \omega^g(\ga) dm(g) \qquad \text{if}\qquad \ga\in N\\
 0\qquad \text{if}\qquad\ga\notin N.
 \end{cases}
 $$
Moreover,  $ \psi_\omega$ coincides with the trace $\vfi_\omega$ on 
 $\widetilde{\mathcal M_\omega}$ from above. 

Indeed, Let $\ga \in N.$ Since
 $$
  U_{\omega,g} \pi_{\mathcal O_\omega}(\ga) U_{\omega,g}^{-1}= \pi_{\mathcal O_\omega}(g(\ga)),
   $$
 we have 
 $$
 \begin{aligned}
 \psi_\omega(\ga)&=  \int_{K_\Ga} \langle\widetilde \pi_{\mathcal O_\omega}(g(\ga)) \eta_\omega\mid \eta_\omega\rangle dm(g)=  \int_{K_\Ga} \langle \pi_{\mathcal O_\omega}(g(\ga)) \eta_\omega\mid \eta_\omega\rangle dm(g)\\
 &= \int_{K_\Ga}\omega^g(\ga) dm(g).
  \end{aligned}
  $$
  Let $\ga \in \Ga\setminus N.$ Then, by the usual properties of an induced representation, 
   $\widetilde\pi_{\mathcal O_\omega}(\ga) (\K_\omega)$ is orthogonal to $\K_\omega$. It follows that
   $$
    \psi_{\omega,g}(\ga)= \langle \widetilde\pi_{\mathcal O_\omega}(\ga)U_{g, \omega}^{-1} \eta_\omega\mid U_{g, \omega}^{-1}\eta_\omega\rangle=0
    $$
    and hence $ \psi_\omega(\ga)= 0.$

 In particular, this shows  that $\psi_\omega$ is a $\Ga$-invariant state on $\widetilde{\mathcal M_\omega}$;
   since $\psi_\omega$ is normal (Step 4), it follows that $\psi_\omega$ is a trace on $\widetilde{\mathcal M_\omega}$.
 As $\widetilde{\mathcal M_\omega}$ is a factor (see Step 3), the fact that $\widetilde{ \psi_\omega}=\vfi_\omega$ follows from the uniqueness of normal traces on factors (see Corollaire p. 92 and Corollaire 2 p.83 in  \cite[Chap. I, \S 6]{Dixm--vN}).

    \vskip.5cm
 \textbf{Step 6} 
 The Plancherel measure $\mu$ on $\Ga$ is the image of $\nu$
 under the map $\Phi$ as in the statement of Theorem~\ref{Pro-PlancherelFC}.ii.

  Indeed, for $ f\in \CCC[\Ga]$, we have by Step 5
  $$
 \begin{aligned}
\Vert f\Vert_2^2 &=\int_{\Omega} \Vert \widetilde{\pi_{\mathcal O_\omega}}(f)\Vert^2 d\dot{\nu}(\omega)=\int_{\Omega} \psi_\omega\left( f^*\ast f\right) d\dot{\nu}(\omega)\\
%&=\int_{\Omega} \int_{K_\Ga}   \psi_{\omega,g}(f^*\ast f) dm(g) d\dot{\nu}(\omega)=\int_{\Omega} \int_{K_\Ga}   \psi_{\omega,g}((f^*\ast f)|_N) dm(g) d\dot{\nu}(\omega)\\
 &=\int_{\Omega} \int_{K_\Ga}   \omega^g((f^*\ast f)|_N) dm(g) d\dot{\nu}(\omega)\\
&=\int_{\Omega} \int_{\mathcal O_\omega} \left (\int_{K_\Ga}   \omega^g((f^*\ast f)|_N) dm(g)\right)d{m_\omega}(t)d\dot{\nu}(\omega)\\
&=\int_{\Char(\GaFC)} \int_{K_\Ga}   t^g((f^*\ast f)|_N)dm(g)d\nu(t)\\
&= \int_{\Char(\Ga)} \Phi(t) (f^*\ast f) d\nu(t)
\end{aligned}
$$
and the claim follows.
    
  \begin{remark}
\label{Rem-Pro-PlancherelFC}
The map $\Phi$ in Theorem~\ref{Pro-PlancherelFC}.ii can be described without reference to the group $K_\Ga$ as follows. For $t\in \Char(\GaFC)$ and $\ga\in \Ga,$ we have
$$\Phi(t)(\ga)=
\begin{cases}
 \dfrac{1}{\#[\ga]}\sum_{x\in [\ga]} t(x)\qquad \text{if}\qquad \ga\in \Ga_{\rm fc}\\
 0 \qquad \text{if}\qquad  \ga\notin \Ga_{\rm fc}
 \end{cases},
 $$
 where $[\ga]$ denotes the $\Ga$-conjugacy class of $\ga.$
 Indeed, it suffices to consider the case where $\ga\in \GaFC$. The
 stabilizer $K_0$ of $\ga$ in $K_\Ga$ is an open and hence  cofinite subgroup
 of $K_\Ga;$ in particular, the $K_\Ga$-orbit of $\ga$ coincides
 with the $\Ad(\Ga)$-orbit of $\ga$ and so $ \{\Ad(x)\mid x\in [\ga]\}$ is a system
 of representatives for $K_\Ga/K_0.$
 Let $m_0$ be the normalized Haar measure on $K_0.$
 The normalized Haar measure $m$ on $K_\Ga$ is then given by  
 $m (f)=  \dfrac{1}{[\ga]}\sum_{x\in [\ga]} \int_{K_0}f(\Ad(x)g)dm_0(g)$ for every continuous function $f$ on $K_\Ga.$ It follows that
  $$
  \Phi(t)(\ga)=\int_{K_\Ga} t(g (\ga))dm(g)=
  % \dfrac{1}{[\ga]}\sum_{x\in [\ga]} \int_{K_0}t(\Ad(x)g(\ga))dm_0(g)=
  \dfrac{1}{[\ga]}\sum_{x\in [\ga]} t(x).
  $$
  \end{remark}

\section{Proofs of Corollary~\ref{Theo-Plancherel-Linear} and Corollary~\ref{Cor-Theo-Plancherel-Linear}}\label{SS:Plancherel-Linear}
Let $\Ga$ be a countable linear group. So, $\Ga$ is a subgroup of $GL_n(\kk)$ for a field 
$\kk$, which may be assumed to be algebraically closed. Let $\mathbf G$ be the closure of $\Ga$ in the Zariski topology of $GL_n(\kk)$ and let $\mathbf G_0$ be the irreducible  component of $\mathbf G.$ As is well-known,  
$\mathbf G_0$  has finite index in $\mathbf G$  and hence $\Ga_0:= \mathbf G_0\cap \Ga$ is a normal subgroup of finite index in $\Ga$ 

Let $\ga\in \Ga_{\rm fc}$. On the one hand, the centralizer $\Ga_\ga$ of $\ga$ in $\Ga$  is a subgroup of finite index of $\Ga$; therefore,  the irreducible  component of  the Zariski closure of $\Ga_\ga$ coincides with $\mathbf G_0.$
On the other hand, the centralizer   $\mathbf G_\ga$  of $\ga$ in $\mathbf G$ is clearly a Zariski-closed subgroup 
of $\mathbf G$.  It follows that the irreducible  component of $\mathbf G_\ga$ contains (in fact coincides
with)  $\mathbf G_0$ and hence 
$$\Ga_0=  \mathbf G_0\cap \Ga\subset \mathbf G_\ga\cap \Ga= \Ga_\ga.$$

As a consequence,  we see that   $\Ga_0$ acts trivially on $\GaFC$ and hence $\Ad(\Ga)|_{\GaFC}$ is a finite group.
In particular,  $\Ga_0\cap  \GaFC $ is contained in the center $Z(\GaFC)$ of $\GaFC$;
 so $Z(\GaFC)$ has finite index in $\GaFC$ which is therefore a central group.
This proves Items (i) and (ii) of Corollary~\ref{Theo-Plancherel-Linear}. Item (iii)
follows from Theorem~\ref{Pro-PlancherelFC}.ii.

 \vskip.5cm
 Assume now that $\mathbf G$ is connected, that is $\mathbf G=\mathbf G_0.$ 
 Then  $\Ga=\Ga_0$ acts trivially on $\GaFC$ and so  $\GaFC$ coincides with the center $Z(\Ga)$ of 
 $\Ga.$ This proves the first part of Corollary~\ref{Cor-Theo-Plancherel-Linear}.
 
 It remains to prove that the assumption $\mathbf G=\mathbf G_0$ is satisfied in Cases (i) and (ii) of 
 Corollary~\ref{Cor-Theo-Plancherel-Linear}:
  \begin{itemize}
 \item[(i)] Let $\mathbf G$ be  a connected linear algebraic group  over a  countable field $\kk$ of characteristic 0. 
 Then $\Ga= \mathbf G(\kk)$ is Zariski dense in $\mathbf G$, by \cite[Corollary p.44]{Rosenlicht}).
 \item[(ii)] Let $\mathbf G$ be a  connected linear algebraic group $\mathbf G$
 over  a local field $\kk$. Assume that $\mathbf{G}$ has no proper   $\kk$-subgroup $\mathbf{H}$
  such that $(\mathbf G/\mathbf H)(\kk)$ is compact. Then every lattice $\Ga$ in $\mathbf G(\kk)$ is Zariski dense in 
  $\mathbf G$, by  \cite[Corollary 1.2]{Shalom}.
 \end{itemize}

\section{The Plancherel Formula for some countable groups}
\label{Examples}

\subsection{Restricted direct product of finite groups}
\label{Exa-Restricted}
Let $(G_n)_{n\ge 1}$ be a sequence of  finite groups.
Let $\Ga=\prod_{n\ge 1}' G_n$ be the \textbf{restricted direct product} of the $G_n$'s, that is, 
$\Ga$ consists of the sequences $(g_n)_{n\ge 1}$ with $g_n\in G_n$ for all $n$
and $g_n\ne e$ for at most  finitely many $n.$ It is clear that 
$\Ga$ is an FC-group.

Set  $X_n:=\Char(G_n)$ for $n\ge 1$ and let $X=\prod_{n\ge 1} X_n$
be the cartesian product  equipped with the product topology, where 
each $X_n$ carries the discrete topology.
Define a map $\Phi: X\to \Tr(\Ga)$ by 
$$
\Phi((t_n)_{n\ge 1})((g_n)_{n\ge 1})= \prod_{n\ge 1} t_n(g_n) \tout (t_n)_{n\ge 1}\in X, (g_n)_{n\ge 1}\in \Ga
$$
(observe that this product  is well-defined, since  $g_n=e$ and hence $t_n(g_n)=1$ for almost every $n\ge 1$).
Then $\Phi(X)= \Char(\Ga)$ and $\Phi: X\to \Char(\Ga)$ is a homeomorphism (see \cite[Lemma 7.1]{Mautner2})

For every $n\ge 1,$ let $\nu_n$ be the  measure on $X_n$ given by 
$$ 
\nu_n(\{t\})= \dfrac{d_t^2}{\# G_n} \tout t\in X_n,
$$
where $d_t$ is the dimension of the irreducible representation of $G_n$ with $t$ as 
character; observe that $\nu_n$ is a probability measure, since $\sum_{t\in X_n} d_t^2= \# G_n$.

Let $\nu=\otimes_{n\ge 1} \nu$ be the product measure on the Borel subsets of $X.$
The Plancherel measure  on $\Ga$ is the image of $\nu$ under $\Phi$ (see Equation (5.6) in \cite{Mautner2}).
The regular representation $\la_\Ga$ is of type II if and only if infinitely many $G_n$'s are non abelian
(see  \emph{loc.cit.}, Theorem 1  or Theorem~\ref{Theorem-Thoma-Kaniuth} below).

\subsection{Infinite dimensional Heisenberg group}
\label{Exa-Heisenberg}
Let  $\FF_p$ be the field of order $p$ for an odd prime $p$
and let $V =\oplus_{i\in \NN} \FF_p$ be a vector space over $\FF_p$ of countable infinite dimension.
Denote by  $\omega$ the symplectic form on $V \oplus V$ given by 
$$
\omega((x,y),(x',y')) \, = \, \sum_{i \in \NN} (x_iy'_i -y_ix'_i)
\hskip.5cm \text{for} \hskip.2cm 
(x,y), (x',y') \in V \oplus V.
$$
The ``infinite dimensional" Heisenberg group 
over $\FF_p$ is   the group $\Ga$ with underlying set $V \oplus V \oplus \FF_p$
and with multiplication defined by
$$
(x,y,z) (x',y',z') \, = \, (x+x', y+y', z+z'+ \omega((x,y),(x',y')))
$$
for $(x,y,z), (x',y',z') \in \Ga$.

The group $\Ga$ is an FC-group; since $p\geq 3,$ its center $Z$
coincides with $[\Ga, \Ga]$ and consists of the elements of the form $(0, 0, z)$ for $z\in \FF_p.$
Observe  that $\Ga$ is not virtually abelian.

Let $z_0$ be a generator for the cyclic group $Z$ of order $p$.
The unitary dual $\widehat Z$  consists of the  characters 
defined by $\chi_\omega(z_0^j) = \omega^j$ for $j \in \{0, 1, \hdots, p-1\}$ and $\omega\in C_p,$
where $C_p$ is  the group of $p$-th roots of unity in $\CCC.$
\par

For $\omega \in C_p$, the subspace
$$
\Hi_\omega \, = \, \{ f \in \ell^2(\Gamma) \mid f(z_0x) = \omega f(x)
\hskip.2cm \text{for every} \hskip.2cm
x \in \Gamma\}.
$$
is left and right translation invariant and we have an orthogonal decomposition of 
 $\ell^2(\Gamma) =  \bigoplus_{\omega \in C_p} \Hi_\omega.$
The orthogonal projection $P_\omega$ on $\Hi_\omega$ belongs to the center of $L(\Ga)$ and is given by 
$$
P_\omega (f) (x) \, = \, \dfrac{1}{p} \sum_{i = 0}^{p-1} \omega^{-i}f(z_0^i x)
\tout f \in \ell^2(\Gamma), x \in \Gamma.
$$
One checks that $\Vert P_\omega(\delta_e)\Vert^2= 1/p.$

Let $\pi_\omega$ be the restriction of $\lambda_\Gamma$ to $\Hi_\omega$.
Observe that $\H_1$ can be identified with $\ell^2(\Gamma/Z)$ and $\pi_1$ with  $\lambda_{\Gamma/Z}$.

For $\omega \ne 1$, the representation  $\pi_\omega$ is factorial of type II$_1$
and the corresponding character is $\widetilde{\chi_\omega}$ (for more details, see the proof
of  Theorem 7.D.4 in \cite{BH}).

The integral decomposition of $\delta_e$   is 
$$
\delta_e= \dfrac{1}{p}\int_{\widehat{\Gamma/Z}} \chi  d\nu (\chi)+\dfrac{1}{p}
\sum_{\omega \in C_p\setminus \{1\} } \widetilde{\chi_\omega},
$$
with the corresponding Plancherel formula given for every $f\in \CCC[\Ga]$ by
$$
\Vert f\Vert_2^2= \dfrac{1}{p}\int_{\widehat{\Gamma/Z}} |\mathcal F(P_1(f)) (\chi)|^2  d\nu (\chi) +\dfrac{1}{p}\sum_{\omega \in C_p\setminus \{1\} }\sum_{j=0}^{p-1} (f^*\ast f)(z_0^j)\omega^j,
$$
where $\nu$ is the normalized Haar measure of the compact abelian group $\widehat{\Gamma/Z}$
and $\mathcal F$ the Fourier transform.
In particular, $\lambda_\Ga(\Ga)''$  is a direct sum of an abelian von Neumann algebra
and $p-1$ factors of type II$_1$.
For a more general result, see \cite[Theorem 2]{Kaplansky}.

\subsection{An example involving $SL_d(\ZZ)$}
\label{Exa-SLdZ}
Let  $\Lambda=SL_d(\ZZ)$ for an odd integer $d.$
Fix a prime $p$ and for   $n\geq 1,$ let $G_n=SL_3(\ZZ/p^n \ZZ)$, viewed as (finite) quotient of $\La.$
Let $\Ga$ be the semi-direct product $\Lambda \ltimes \prod_{n\ge 1}' G_n$,
where $\Lambda$ acts diagonally in the natural way on the restricted direct product $G:=\prod_{n\ge 1}' G_n$
of the $G_n$'s. 

Since $\La$ is an ICC-group, it is clear that $\GaFC= G.$
The group $K_\Ga$ as in Theorem~\ref{Pro-PlancherelFC} can be identified with the  projective limit 
of the groups $G_n$'s, that is, with $SL_d(\ZZ_p),$ where $\ZZ_p$ is the ring of $p$-adic integers.
Since $\La$ acts trivially on $\Char(G),$ the same is true for the action of $K_\Ga$  on $\Char(G).$ 

Let  $\la_{G}= \int^\oplus_{\Char(G)} \pi_t d\nu(t)$ be the  Plancherel decomposition 
of $\la_G$ (see Example~\ref{Exa-Restricted}). It follows from Theorem~\ref{Pro-PlancherelFC} that the Plancherel decomposition  of $\la_\Ga$ is
$$\la_{\Ga}= \int^\oplus_{\Char(G)} \ind_{G}^\Ga\pi_t d\nu(t).$$

\appendix
\section{Proof of Theorem~\ref{Theorem-Thoma-Kaniuth}}
\label{S: ThomaKaniuth}
\subsection{Easy implications} 
The implications  $(iv')\Rightarrow (iv)$ and $(i)\Rightarrow (iii)$ are obvious;
if $(iv)$ holds then  $\Ga$ is a so-called CCR group and so  $(i)$ holds, by a general fact (see \cite[5.5.2]{Dixm--C*}). 

We are going to  show that $(ii)\Rightarrow (iv'), $
Assume that $\Gamma$ contains an abelian normal subgroup $N$ of finite index.
 Let $(\pi, \H)$ be an irreducible  representation of $\Gamma$.
 %We have to show that $\pi$ is a multiple of an irreducible representation.
 
Denote by  $\mathcal B$ the set of Borel subsets of the dual group $\widehat{N}$
and by $\Proj(\H)$ the set of orthogonal projections in $\L(\H).$
Let $E\colon \mathcal B(\widehat{N}) \to \Proj(\Hi)$ be the projection-valued measure on $\widehat N$ associated with the restriction  $\pi \vert_N$
by the SNAG Theorem (see \cite[D.3.1]{BHV}); so, we have 
$$
\pi(n) = \, \int_{\widehat G} \chi(n) dE (\chi)
\hskip.5cm \text{for all} \hskip.2cm
n \in N.
$$
%The commutant $\pi(N)'$ coincides with the commutant 
%of $\{E(B)\mid B\in \mathcal B\}$; in particular $E(B)\in \pi(\Ga)''$ for  every $B\in \mathcal B.$

The dual action of $\Ga$  on $\widehat{N}$,
given by $\chi^\ga(n)= \chi(\ga^{-1} n\ga)$ for $\chi\in \widehat{N}$
and $\ga\in \Ga,$ 
factorizes through $\Ga/N$. Moreover, the following covariance relation holds
$$
\pi(\ga)E(B)\pi(\ga^{-1}) \, = \, E(B^{\ga})
\tout
B \in \mathcal B (\widehat N),
$$
where $B^\ga= \{\chi^\ga \mid \chi \in B\}$.

Let $S\in \mathcal B( \widehat{N})$ be the support of $E,$
that is, $S$ is the complement of the largest open subset $U$ of $\widehat{N}$  with $E(U) = 0$. We claim that $S$ consists of a  single $\Ga$-orbit.

Indeed, let $\chi_0\in S$ and let $(U_n)_{n\ge 1}$ be a sequence of
open neighbourhoods of $\chi$ with $\bigcap_{n\ge 1} U_n= \{\chi_0\}.$
Fix $n\ge 1.$ The set $U_n^\Ga$ is  $\Ga$- invariant and hence $E(U_n^\Ga)\in \pi(\Ga)'$, by the covariance relation.
%As $E(U_n^\Ga)\in \pi(\Ga)'',$ it follows that the projection 
%$E(U_n^\Ga)$ belongs to the center of $\pi(\Ga)''$.
%Since $\pi$ is factorial and $E(U_n)\ne 0,$ we have therefore $E(U_n^\Ga)=I_{\H}.$
Since $\pi$ is irreducible and $E(U_n)\ne 0,$ we have therefore $E(U_n^\Ga)=I_{\H}.$
 By the usual properties of a projection-valued measure, this implies
that 
$$
E(\chi_0^\Ga)=E(\bigcap_{n\ge 1} U_n^\Ga)=I_{\H}.
$$
and the claim is proved.
 
 Since $S$ is finite, we have
$\Hi = \bigoplus_{\chi \in S} \Hi^{\chi}$,
where 
$$
\Hi^{\chi} \, := \, \left\{ \xi \in \Hi \mid \pi(n) \xi = \chi(n) \xi 
\tout
n \in N \right\};
$$
moreover, since $N$ is a normal subgroup, we have
$\pi(\gamma) \Hi^{\chi} = \Hi^{\chi^{\gamma}}$ for every $\chi \in S$
and every $\gamma\in \Gamma$.
 
 Let $H$ be the stabilizer of $\chi_0$
and let $T\subset \Ga$ be a set of representatives of the right $T$-cosets of $H$.
Then $\Hi^{\chi_0}$ is invariant under $\pi(H)$ and we have 
 $$
\Hi \, = \, \bigoplus_{t \in T} \Hi^{\chi_0^{t}} \, = \, \bigoplus_{t \in T} \pi(t) \Hi^{\chi_0}.
$$
This shows  that $\pi$ is equivalent to the induced representation 
$\Ind_{H}^\Gamma \sigma$,
where $\sigma$ is the subrepresentation of $\pi \vert_{H}$
defined on $\Hi^{\chi_0}$
\par

We claim that $\Ind_{H}^\Gamma \sigma$ is contained in $\Ind_{N}^\Gamma \chi_0$.
Indeed, as is well-known (see \cite[E.2.5]{BHV}), $\Ind_N^{H}(\sigma \vert_N)$
is equivalent to the tensor product representation $\sigma \otimes \lambda_{H/N}$,
where $\lambda_{H/N}$ is the quasi-regular representation on $\ell^2(\Ga/H).$
Since $H/N$ is finite, $1_{H}$ is contained in $\lambda_{H/N}$
and therefore $\sigma$ is contained  in $\Ind_N^{H} (\sigma\vert_N)$.
Notice that $\sigma\vert_N$ is a multiple $n \chi_0$ of $\chi_0$,
for some cardinal $n$.  We conclude that $\pi=\Ind_{H}^\Gamma\sigma$ is contained in $\Ind_{H}^\Gamma (\Ind_N^{H}n\chi_0)=n\Ind_N^\Gamma\chi_0$. Since $\pi$ is irreducible, it follows
that $\pi$ is contained in $\Ind_N^\Gamma\chi_0$.

Now,  $\Ind_{N}^\Gamma \chi_0$ has dimension $[\Ga: N]$;
hence,  $\dim \pi \leq [\Ga: N]$ and so $(iv')$ holds.

\subsection{Proof of the other implications}
We have to give the proof of  the  implication  $(iii)\Rightarrow (i)$ and the equivalence $(v)\Leftrightarrow (iv).$

In the sequel, $\Ga$ will be a countable group
and $\la_\Ga=\int_{\Char(\Ga)}^\oplus \pi_t d\mu(t)$  the direct integral 
decomposition given by the Plancherel Theorem~\ref{Pro-Plancherel}.
Recall (see Section~\ref{ProofPL}) that, if we write $\delta_e= \int_{\Char(\Ga)}^\oplus \xi_t d\mu(t)$,
then $\xi_t$ is a  cyclic vector in the Hilbert space   $\H_t$ of $\pi_t$ 
and $t=\langle \pi_t(\cdot) \xi_t\mid \xi_t\rangle$, for $\mu$-almost every $t\in \Char(\Ga).$

\subsubsection{Case where the FC-centre of $\Ga$ has infinite index}
We assume  that $[\Gamma\colon \Ga_{\rm fc}]$ is infinite; we claim that $\la_\Ga$ is of type II.

By Remark~\ref{SupportPlancherel}.i, there exists a subset 
$X$ of $\Char(\Ga)$ with $\mu(X)=1$ such that $t=0$
outside $\Ga_{\rm fc}$ for every $t\in X.$

Let $t\in X$.  Then the  factor $\pi_t(\Gamma)''$ is infinite dimensional.
Indeed, since  $[\Gamma \colon \Ga_{\rm fc}]$ is infinite, we can find a sequence $(\ga_n)_{n\geq 1}$
in $\Ga$ with $\ga_m^{-1} \ga_n\notin \Ga_{\rm fc}$ for every $m,n$ with 
$m\ne n.$ Then
$$
\langle \pi_t(\ga_n) \xi_t\mid \pi_t(\ga_m)\xi_t\rangle= 
\langle \pi_t(\ga_m^{-1}\ga_n) \xi_t\mid\xi_t\rangle= t(\ga_m^{-1}\ga_n)=0,
$$
for $m\ne n$; so, $ (\pi_t(\ga_n)\xi_t)_{n\ge 1}$ is an orthonormal sequence in $\H_t.$
This implies that $ (\pi_t(\ga_n))_{n\ge 1}$ is a linearly independent sequence
in $\pi_t(\Ga)''$ and the claim is proved.

Observe that we have proved, in particular, that $\Ga$ is not of type I.

\subsubsection{Reduction to FC-groups}
 \label{SS:Reduction}
Let $\Char(\Ga)_{\rm{fd}}$  be the set of $t\in \Char(\Ga)$ for which 
$\pi_t(\Ga)''$ is finite dimensional.  Then 
$$\Char(\Ga)_{\rm{fd}}= \bigcup_{n\geq 1} \Char(\Ga)_{n},$$
where $\Char(\Ga)_{n}$ is the set of $t$ such that 
$\dim \pi_t(\Ga)''=n$. 
We claim that  $\Char(\Ga)_{n}$ and hence $\Char(\Ga)_{\rm{fd}}$ is a measurable subset of  $\Char(\Ga)$.

Indeed, let $\mathcal F$ be collection of
finite subsets of $\Ga.$
 For every $F\in \mathcal F,$ 
let $C_F$ be the set of $t\in \Char(\Ga)$ such that the family
$(\pi_t(\ga))_{\ga\in F}$ is linearly independent, equivalently (see Remark~\ref{SupportPlancherel}.ii),
such that $(\pi_t(\ga)\xi_t)_{\ga\in F}$ is linearly independent.
Since $t=\langle \pi_t(\cdot) \xi_t\mid \xi_t\rangle,$ it follows that
$$
C_F:=\left\{ t\in \Char(\Ga) \mid \det (t(\ga^{-1}\ga'))\neq 0 \tout (\ga, \ga')\in F\times F \right\}
$$
and this shows that $C_F$ is measurable. Since
$$
\Char(\Ga)_n= \bigcup_{F\in \mathcal F: \#F=n} C_F \setminus  \left(\bigcup_{F'\in \mathcal F: \# F'>n} C_{F'} \right)\leqno{(*)}
$$
and $\mathcal F$ is countable, it follows that $\Char(\Ga)_n$ is measurable.

Assume  now that $[\Gamma\colon \Ga_{\rm fc}]$ is finite.
Observe that, for a cyclic representation $\pi$ of $\GaFC,$ the induced representation $\Ind_{\GaFC}^\Ga \pi$ is cyclic
and so $(\Ind_{\GaFC}^\Ga \pi)(\Ga)''$ is finite dimensional 
if and only if $\pi(\GaFC)''$ is finite dimensional.

let $\nu$ be the Plancherel measure of $\GaFC.$
It follows from Theorem~\ref{Pro-PlancherelFC} that $\mu(\Char(\Ga)_{\rm{fd}})= \nu(\Char(\GaFC)_{\rm{fd}})$; in particular,
we have $\mu(\Char(\Ga)_{\rm{fd}})= 0$ (or $\mu(\Char(\Ga)_{\rm{fd}})= 1$)  if and only if $ \nu(\Char(\GaFC)_{\rm{fd}})= 0$ (or $\nu(\Char(\GaFC)_{\rm{fd}})= 1$), that is,   
$\la_\Ga$ is  of type I (or of type II) if and only if $\la_{\GaFC}$ is of type I (or of type II).

 Observe also that $\Ga$ is virtually abelian if and only if 
 $\GaFC$ is virtually abelian. As a consequence, we see that it suffices to 
prove the  implication  $(iii)\Rightarrow (i)$ and the equivalence $(v)\Leftrightarrow (iv)$ 
in the case where $\Ga= \GaFC.$
 
\subsubsection{Case of an  FC-group}

  We will need the following lemma of independent interest, which is valid for an arbitrary countable group
  $\Ga.$ Let $r:\Char(\Ga)\to \Tr([\Ga, \Ga])$ be the restriction map.
  We will identify $\Char(\Ga/[\Ga, \Ga])$  with the set $\{s\in\Char(\Ga) \mid r(s)=1_{[\Ga, \Ga]}\},$
  that is, with the set of unitary characters  of $\Ga.$
  Observe that, for every 
  $s\in \Char(\Ga/[\Ga, \Ga])$ and $t\in \Char(\Ga),$ we have $st\in \Char(\Ga).$

  \begin{lemma}
   \label{Lem-CommutatorChar}  
  Let $\Ga$ be a countable group and $t, t'\in \Char(\Ga)$ be such that $r(t)=r(t').$ Then 
 there exists $s\in \Char(\Ga/[\Ga, \Ga])$ such that $t'=st.$
 \end{lemma}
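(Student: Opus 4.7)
The plan is to exploit the natural action of the compact abelian group $\widehat A := \Char(\Ga/[\Ga,\Ga])$ on $\Char(\Ga)$ by pointwise multiplication, combined with the uniqueness of barycentric decompositions on the simplex $\Tr(\Ga)$. The goal is to show that $t'$ belongs to the $\widehat A$-orbit of $t$, which is precisely what the lemma asserts.

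First I would verify that for $s \in \widehat A$ and $u \in \Char(\Ga)$ the pointwise product $su$ lies in $\Char(\Ga)$. Positive-definiteness and normalization are immediate. Conjugation invariance of $su$ follows from that of $u$ because $s$, being trivial on $[\Ga,\Ga]$, satisfies $s(\ga_1\ga_2\ga_1^{-1}) = s(\ga_2)$ for every $\ga_1,\ga_2 \in \Ga$ (since $\ga_1\ga_2\ga_1^{-1}\ga_2^{-1} \in [\Ga,\Ga]$). Extremality is inherited from $u$: if $su = \tfrac{1}{2}(v_1 + v_2)$ with $v_i \in \Tr(\Ga)$, then $u = \tfrac{1}{2}(\overline s v_1 + \overline s v_2)$ is a convex decomposition inside $\Tr(\Ga)$, which by extremality of $u$ forces $\overline s v_1 = \overline s v_2 = u$, hence $v_1 = v_2 = su$. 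Thus $\widehat A$ acts continuously on $\Char(\Ga)$.

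Next I would compute the barycenter of a $\widehat A$-orbit. Let $m$ denote the normalized Haar measure on $\widehat A$. Orthogonality of characters of the abelian quotient $\Ga/[\Ga,\Ga]$ gives $\int_{\widehat A} s(\ga)\,dm(s) = 1$ if $\ga \in [\Ga,\Ga]$ and $0$ otherwise, so that
$$\int_{\widehat A} (su)(\ga)\,dm(s) \, = \, \widetilde{u|_{[\Ga,\Ga]}}(\ga) \qquad \text{for every } \ga \in \Ga.$$
The right-hand side, being the extension by zero of the positive-type function $u|_{[\Ga,\Ga]}$, belongs to $\Tr(\Ga)$ (normalization and conjugation invariance are easily checked, using that $[\Ga,\Ga]$ is normal).

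To conclude, let $\mu_t$ and $\mu_{t'}$ be the pushforwards of $m$ under the continuous maps $s \mapsto st$ and $s \mapsto st'$ from $\widehat A$ to $\Char(\Ga)$. Both are probability measures on $\Char(\Ga)$; their barycenters in $\Tr(\Ga)$ equal $\widetilde{t|_{[\Ga,\Ga]}}$ and $\widetilde{t'|_{[\Ga,\Ga]}}$ respectively, and these coincide by the hypothesis $r(t) = r(t')$. Since $\Tr(\Ga)$ is a Choquet simplex (Satz 1 of \cite{Thoma1}), the representing measure on the extreme points $\Char(\Ga)$ is unique, and so $\mu_t = \mu_{t'}$. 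As $m$ has full support on $\widehat A$, the supports of $\mu_t$ and $\mu_{t'}$ are precisely the orbits $\widehat A \cdot t$ and $\widehat A \cdot t'$, which must therefore coincide; hence $t' = st$ for some $s \in \widehat A$. The main delicate steps are the verification that $su$ remains extremal in $\Tr(\Ga)$ and the correct invocation of Choquet uniqueness on this simplex.
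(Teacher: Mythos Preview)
Your proof is correct and follows essentially the same route as the paper's: both push forward the Haar measure on $\widehat A=\Char(\Ga/[\Ga,\Ga])$ via $s\mapsto st$ and $s\mapsto st'$, observe that the common barycenter in $\Tr(\Ga)$ is $t\,\mathbf{1}_{[\Ga,\Ga]}=t'\,\mathbf{1}_{[\Ga,\Ga]}$, invoke the simplex property of $\Tr(\Ga)$ to conclude the pushforward measures coincide, and then read off equality of the orbits from equality of supports. The paper is slightly terser (it takes the fact that $st\in\Char(\Ga)$ for granted, whereas you supply the extremality argument), but the structure and key ideas are identical.
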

 \begin{proof} The integral decomposition of  $\mathbf{1}_{[\Ga, \Ga]}\in \Tr(\Ga)$ into 
  characters is given by
  $$\mathbf{1}_{[\Ga, \Ga]}=\int_{\Char(\Ga/[\Ga, \Ga])} s d\nu (s),$$
  where $\nu$ is the Haar measure of  $\Ga/[\Ga, \Ga].$
  By assumption, we have $t\mathbf{1}_{[\Ga, \Ga]}= t'\mathbf{1}_{[\Ga, \Ga]}$ and hence
  $$
 t\mathbf{1}_{[\Ga, \Ga]}= \int_{\Char(\Ga/[\Ga, \Ga])} ts d\nu (s)=\int_{\Char(\Ga/[\Ga, \Ga])} t's d\nu (s).
  $$
 By uniqueness of  integral decomposition, it follows that the images $\nu_t$ and $\nu_{t'}$ of $\nu$ under 
 the maps $\Char(\Ga/[\Ga, \Ga])\to \Char(\Ga)$ given respectively by multiplication with $t$ and $t'$ coincide.
 In particular, the supports of $\nu_t$ and $\nu_{t'}$ are the same, that is, 
 $t\Char(\Ga/[\Ga, \Ga])= t'\Char(\Ga/[\Ga, \Ga])$ and the claim follows.
\end{proof}

\vskip.2cm
We assume from now on that $\Gamma=\GaFC.$

 \vskip.5cm
 \textbf{Step 1}  We claim that the regular representation $\la_\Ga$ is  of type II
  if and only if $[\Ga, \Ga]$ is infinite.

We have to show that  $\mu(\Char(\Ga)_{\rm{fd}})>0$ if and only if $[\Ga, \Ga]$ is finite.

Assume first that $[\Ga, \Ga]$ is finite.  The representation $\la_{\Ga/[\Ga, \Ga]}$,
lifted to $\Ga$, is a subrepresentation of $\la_\Ga,$ since
$\ell^2(\Ga/[\Ga, \Ga])$ can be viewed in an obvious way as $\Ga$-invariant 
subspace of $\ell^2(\Ga)$. As $\Ga/[\Ga, \Ga]$ is abelian,  $\la_{\Ga/[\Ga, \Ga]}$is of type
I and so $\mu(\Char(\Ga)_{\rm{fd}})>0.$

Conversely, assume that $\mu(\Char(\Ga)_{\rm{fd}})>0.$
Since $\Ga$ is an FC-group,
it suffices to show that $\Ga$ has a subgroup of finite index 
with finite commutator subgroup (see \cite[Lemma 4.1]{Neumann}).

As $\mu(\Char(\Ga)_{\rm{fd}})>0$ and $\Char(\Ga)_{\rm{fd}}= \bigcup_{n\geq 1}\Char(\Ga)_{n},$ we have  $\mu(\Char(\Ga)_{n})>0$ for some $n\ge 1.$
It follows from (*) that  there exists $F\in \mathcal F$ with $|F|=n$
such that $\mu(C_F \cap \Char(\Ga)_{{n}})>0.$

Let $\Lambda$ be the subgroup of $\Ga$ generated by $F.$
Since $\Ga$ is an FC-group and $\Lambda$ is finitely generated, the centralizer 
$H:=\Cent_\Ga(\Lambda)$ of $\Lambda$ in $\Ga$ has finite index.

Let $t\in C_F \cap \Char(\Ga)_{\rm{n}}$ and $\ga_0\in H$.  On the one hand, since $(\pi_t(\ga))_{\ga\in F}$ is a basis of the vector space $\pi_t(\Ga),$ we have 
$\pi_t(\Lambda)''= \pi_t(\Ga)''.$ On the other hand, as $\ga_0$ centralizes $\Lambda,$ we have $\pi_t(\ga_0)\in \pi_t(\Lambda)'$. Hence, $\pi_t(\ga_0)$ belongs to the center $\pi_t(\Ga)'\cap \pi_t(\Ga)''$ of the factor
$\pi_t(\Ga)''$ and so $\pi_t(\ga_0)$ is a scalar multiple of  $I_{\H_t}$.
It follows in particular  that $\pi_t$ is trivial on $[H,H].$
 As a result, the subrepresentation 
$\int_{C_F \cap \Char(\Ga)_{\rm{n}}}^\oplus \pi_t d\mu(t)$
of $\la_\Ga$ is trivial on $[H,H].$  Since the matrix coefficients of $\la_\Ga$ vanish
at infinity,  it follows that $[H,H]$ is finite and the claim is proved.

\vskip.5cm
 In view of what we have shown so far, we  may and will assume from now on  that $[\Ga, \Ga]$ is finite and
 that $\la_\Ga$ is of type I.  We are going  to show that $\Ga$ is a virtually abelian
 (in fact, a central) group and this will finish the proof of Theorem~\ref{Theorem-Thoma-Kaniuth}.
 
 \vskip.3cm
 Set $N:= [\Ga, \Ga]$ and let  $r:\Char(\Ga)\to \Tr(N)$ be the restriction map.
  
 \vskip.5cm
 \textbf{Step 2} We claim that there exist  finitely many functions $s_1, \dots, s_m$ in $\Tr(N)$
 such that $r(t)\in \{s_1, \dots, s_m\}$, for $\mu$-almost every $t\in \Char(\Ga).$
 
 Indeed, since $\la_\Ga$ is of type I, there exists a subset $X$ of $\Char(\Ga)$ with 
 $\mu(X)=1$ such that $\pi_t(\Ga)''$ is finite dimensional for  every  $t\in X.$ 
 
 Let $t\in X.$ The Hilbert space $\H_t$ of $\pi_t$ is finite dimensional and $\pi_t$ is a (finite) multiple of
 an irreducible representation $\sigma_t$ of $\Ga.$ As $\pi_t$ and $\sigma_t$ have the
 same normalized character, we  may assume that $\pi_t$ is irreducible.

 Let $\K$ be an irreducible $N$-invariant subspace of $\H_t$ and let $\rho$ be the corresponding 
 equivalence class of representation of $N.$ 
 For $g\in \Ga,$ the subspace $\pi_t(g)\K$ is $N$-invariant with $\rho^g$ as   corresponding representation of 
 $N$. Since $\pi_t$ is irreducible, we have $\H_t= \sum_{g\in \Ga} \pi_t(g) \K.$
 
 Let $L$ be the stabilizer of $\rho$; observe that $L$ has finite index in $\Ga,$ 
 since $L$ contains the centralizer of $N$ and $\Ga$ is an FC-group.
  Let $g_1, \dots, g_r$  be a set of representatives for the coset space $\Ga/L$.   Then $\H_t= \oplus_{j=1}^r \pi_t(g_j) \K_{\rho},$
 where $\K_{\rho}$ is the sum of all $N$-invariant subspaces of $\H_t$ with corresponding representation equivalent to $\rho.$
 The normalized trace of the representation of $N$ on $\pi_t(g_j) \K_{\rho}$ is $\chi_\rho^{g_j}$, where 
 $\chi_\rho$ is the normalized character of $\rho.$ It follows that, for every $g\in N,$ we have
 $$
 t(g) = \dfrac{1}{r}\sum_{j=1}^r \chi_\rho^{g_j}(g).
 $$
 Since $[\Ga, \Ga]$ is finite, $[\Ga, \Ga]$  has only finitely many  equivalence classes of irreducible representations and
 the claim follows.

 \vskip.5cm
 \textbf{Step 3} We claim that the center $Z(\Ga)$ has finite index in $\Ga$.
 
 Indeed, by Step 2, there exists a subset $X$ of $\Char(\Ga)$ with 
 $\mu(X)=1$ and finitely many $t_1, \dots, t_m\in X$ such that 
 $r(t)\in \{r(t_1), \dots, r(t_m)\}$ and such that $\H_t$ is finite dimensional  for every $t\in X.$
 
 It follows from Lemma~\ref{Lem-CommutatorChar} that, for every $t\in X$,  there exists  $s\in \Char(\Ga/[\Ga, \Ga])$
 such that $t= st_i$ for some $i\in \{1, \dots, m\}$ 
 and hence $\dim \pi_t(\Ga)''=\dim \pi_{t_i}(\Ga)''$.
 As a result,  we can find a finitely generated normal subgroup $M$ of $\Ga$ such that
$\dim \pi_t(\Ga)''=\dim \pi_{t}(M)''$ for every $t\in X.$

Since  the centralizer $C$ of $M$ in $\Ga$ has finite index,
it suffices to show that $C$ contains $Z(\Ga).$ 

For  $g\in C, \ga\in \Ga$ and $x\in M,$ we have
$t( x^{-1}g\ga g^{-1})= t( x^{-1}\ga )$, that is, 
$$\langle \pi_t(g\ga g^{-1} )\xi_t\mid \pi_t(x)\xi_t\rangle=\langle \pi_t(\ga )\xi_t\mid \pi_t(x)\xi_t\rangle.
$$
Since $\dim \pi_t(\Ga)''=\dim \pi_{t}(M)''$, 
the linear span of $\pi_t(M)\xi_t$ is dense in $\H_t$ and
this implies that $\pi_t(g\ga g^{-1} )\xi_t= \pi_t (\ga)\xi_t$ for  all $g\in C, \ga\in \Ga,$ and $t\in X.$
It follows that $\la_\Ga(g\ga g^{-1}) \delta_e= \la_\Ga(\ga)\delta_e$
and hence $g\ga g^{-1} =\ga$ for  all $g\in C$ and $ \ga\in \Ga$; so,  $C \subset Z(\Ga)$.

\end{document}